\newtheoremstyle{thm}
  {\baselineskip}
  {\baselineskip}
  {\itshape}
  {}
  {\bf}
  {.}
  {.5em}
  {}
\newtheoremstyle{others}
  {\baselineskip}
  {\baselineskip}
  {\upshape}
  {}
  {\bf}
  {.}
  {.5em}
  {}
\theoremstyle{thm}
\newtheorem{theorem}{Theorem}[section]
\newtheorem{proposition}[theorem]{Proposition}
\newtheorem{corollary}[theorem]{Corollary}
\theoremstyle{others}
\newtheorem{definition}[theorem]{Definition}
\newtheorem{remark}[theorem]{Remark}
\DeclareMathOperator*{\argmin}{arg\,min}
\newlength{\arraycolseptmp}
\newcommand{\bigO}{\mathcal{O}}
\newcommand{\CI}{\operatorname{COR}_I}
\newcommand{\CIC}{\operatorname{COR}_{I^\complement}}
\newcommand{\convbi}{\ast}
\newcommand{\hilbert}{\mathscr{H}}
\newcommand{\Id}{\operatorname{Id}}
\newcommand{\ip}[2]{\langle #1, #2 \rangle}
\newcommand{\Ip}[2]{\big\langle #1, #2 \big\rangle}
\newcommand{\kr}{\operatorname{ker}}
\newcommand{\majo}{\mathcal{M}}
\newcommand{\N}{\mathbb{N}}
\newcommand{\Nearrow}{\rotatebox[origin=c]{45}{$\Rightarrow$}}
\newcommand{\norm}[1]{\|#1\|}
\newcommand{\nsr}{r_{\eta / u}}
\newcommand{\R}{\mathbb{R}}
\newcommand{\rg}{\operatorname{rg}}
\newcommand{\Searrow}{\rotatebox[origin=c]{-45}{$\Rightarrow$}}
\newcommand{\set}[2]{\{#1\, | \,#2\}}
\newcommand{\Set}[2]{\big\{#1\, \big| \,#2\big\}}
\newcommand{\Sign}{\operatorname{Sign}}
\newcommand{\sign}{\operatorname{sign}}
\newcommand{\spa}{\operatorname{span}}
\newcommand{\sparse}{^{\scriptscriptstyle\lozenge}}
\newcommand{\sumstack}[2]{\renewcommand{\arraystretch}{0.5}
\setlength{\arraycolseptmp}{\arraycolsep}
\setlength{\arraycolsep}{0pt}
  \begin{array}{c}
    \scriptstyle #1 \\
    \scriptstyle #2
  \end{array}
  \setlength{\arraycolsep}{\arraycolseptmp}
  \renewcommand{\arraystretch}{1.0}}
\newcommand{\supp}{\operatorname{supp}}
\newcommand{\Z}{\mathbb{Z}}
\begin{document}

\title[Exact recovery with Tikhonov regularization]{Beyond convergence rates: Exact recovery with Tikhonov regularization with sparsity constraints}

\author{DA~Lorenz$^1$, S~Schiffler$^2$ and D~Trede$^{2,}$\footnote[7]{Author to whom correspondence shall be addressed.}}
\address{$^1$TU Braunschweig, Institute for Analysis and Algebra, Pockelsstr. 14, D-38118 Braunschweig, Germany\\
  $^2$Zentrum f\"ur Technomathematik, Universit\"at Bremen, Fachbereich Mathematik/Informatik, Postfach 33 04 40, D-28334 Bremen, Germany\\}

\eads{\mailto{d.lorenz@tu-braunschweig.de}, \mailto{schiffi@math.uni-bremen.de} and \mailto{trede@math.uni-bremen.de}}

\begin{abstract}
  The Tikhonov regularization of linear ill-posed problems with an
  $\ell^1$ penalty is considered. We recall results for linear
  convergence rates and results on exact
  recovery of the support.  Moreover, we derive conditions for exact
  support recovery which are especially applicable in the case of
  ill-posed problems, where other conditions, e.g.~based on the
  so-called coherence or the restricted isometry property are usually
  not applicable. The obtained results also show that the regularized
  solutions do not only converge in the $\ell^1$-norm but also in the
  vector space $\ell^0$ (when considered as the strict inductive limit
  of the spaces $\R^n$ as $n$ tends to infinity). Additionally, the
  relations between different conditions for exact support recovery
  and linear convergence rates are investigated.

  With an imaging example from digital holography the applicability of
  the obtained results is illustrated, i.e.~that one may check a
  priori if the experimental setup guarantees exact recovery with
  Tikhonov regularization with sparsity constraints.
\end{abstract}

\ams{47A52, 65J20}

\section{Introduction}
In this paper we consider linear inverse problems with a bounded linear operator
$A:\hilbert_1 \to \hilbert_2$ between two separable Hilbert spaces $\hilbert_1$ and $\hilbert_2$,
\begin{equation}
  Af = g.
  \label{eq_Tikhonov-l1-IP}
\end{equation}
We are given a noisy observation $g^\varepsilon = g + \eta \in
\hilbert_2$ with noise level $\norm{g-g^\varepsilon} 
\leq\varepsilon$ and try to reconstruct the solution $f$ of $Af = g$
from the knowledge of $g^\varepsilon$. We are especially interested in
the case in which~\eqref{eq_Tikhonov-l1-IP} is ill-posed in the sense
of Nashed, i.e.~when the range of $A$ is not closed. In particular
this implies that the (generalized) solution
of~\eqref{eq_Tikhonov-l1-IP} is unstable, or in other words, that the
generalized inverse $A^\dagger$ is unbounded. In this context,
regularization has to be employed to stably solve the
problem~\cite{engl1996inverseproblems}.

We assume that the operator equation $Af=g$ has a solution $f\sparse$ 
that can be expressed sparsely in an orthonormal basis $\Psi := \{\psi_i\}_{i\in \Z}$ of $\hilbert_1$,
i.e.~$f\sparse$ decomposes into a finite number of basis elements,
\[
  f\sparse = \sum\limits_{i\in\Z} u_i\sparse  \psi_i
  \quad \mbox{with} \quad
  u\sparse \in\ell^2(\Z,\R),\quad \mbox{and}\quad \big|\set{i\in\Z}{u_i\sparse \neq 0}\big|<\infty.
\]
The knowledge that $f\sparse$ can be expressed sparsely can be utilized
for the reconstruction by using an \emph{$\ell^1$-penalized Tikhonov} regularization~\cite{daubechies2003iteratethresh},
i.e.~an approximate solution is given as a minimizer of the
functional
\begin{equation}
  \tfrac{1}{2}
  \norm{Af - g^\varepsilon}^2_{\hilbert_2} + \alpha \sum\limits_{i\in\Z} |\ip{f}{\psi_i}|,
  \label{eq_Tikhonov-l1_HH}
\end{equation}
with regularization parameter $\alpha>0$.
In contrast to the classical Tikhonov functional with a quadratic penalty~\cite{engl1996inverseproblems},
the $\ell^1$-penalized functional promotes sparsity since small coefficients
are penalized more.

For the sake of notational simplification, we use $\ell^2 = \ell^2(\Z,\R)$ and introduce the synthesis operator
$D:\ell^2 \to \hilbert_1$, which for $u\in\ell^2$ is defined by $D u = \sum u_i \psi_i$.
With that and the definition 
$K:=A\circ D:\ell^2\to \hilbert_2$
we can rewrite the inverse problem~(\ref{eq_Tikhonov-l1-IP}) as $K u = g$.
Adopting the usual convention in convex analysis we use the following
somewhat sloppy notation
\[
\norm{\,\cdot\,}_{\ell^1}: \ell^2 \to [0,\infty],\quad \norm{u}_{\ell^1} =
\begin{cases}
  \norm{u}_{\ell^1}, & \text{if } u\in\ell^1,\\
  \infty, & \text{if } u\in\ell^2\setminus\ell^1,
\end{cases}
\]
and we rewrite the $\ell^1$-penalized Tikhonov
regularization~(\ref{eq_Tikhonov-l1_HH}) as
\begin{equation}\label{eq_Tikhonov_ell1}
  T_\alpha(u) := 
  \tfrac{1}{2}
  \norm{Ku - g^\varepsilon}^2_{\hilbert_2} + \alpha \norm{u}_{\ell^1}.
\end{equation}
In the following we frequently use the standard basis
of $\ell^2$, which is denoted by $\{e_j\}_{j\in\Z}$. 
The Tikhonov functional~(\ref{eq_Tikhonov_ell1}) has also been used in
the context of sparse recovery under the name Basis Pursuit
Denoising~\cite{Chen1998basispursuit}.

Daubechies et
al.~\cite{daubechies2003iteratethresh} showed that the minimization
of~\eqref{eq_Tikhonov_ell1} is indeed a regularization and derived
error estimates in a particular wavelet setting. Error estimates and
convergence rates under different source conditions have been derived
by Lorenz~\cite{lorenz2008reglp} and Grasmair et
al.~\cite{grasmair2008sparseregularization}. In this paper we aim at
conditions that ensure that the minimizers $u^{\alpha,\varepsilon} \in
\argmin T_\alpha(u)$ have the same support as $u\sparse$. In the
context of sparse recovery, this phenomenon is called \emph{exact recovery}. 
One of the main applications in the field of sparse recovery is compressive sampling,
a new sampling technique which allows to sample sparse signals at low
rates~\cite{Candes2006b}.
Our approach builds heavily on techniques and results from the field
of sparse recovery
from~\cite{Fuchs2004,fuchs2005exactsparse,Tropp2006relax,Dossal2005,Gribonval2008}
some of which we transfer to the field of inverse and ill-posed
problems. Very roughly spoken, the conditions for exact recovery can
be divided into two classes: Sharp conditions which are 
not practical since they rely on unknown quantities (in this
category are for example Tropp's ERC~\cite[theorem~8]{Tropp2006relax} and the
null space property~\cite{gribonval2007sparsenessmeasure}). More loose
conditions which are far from being necessary but seem more practical
(in this category are for example conditions using
incoherence~\cite{donoho2003sparse},~\cite[corollary~9]{Tropp2006relax} and the restricted isometry
property~\cite{candes2005decoding}). Moreover, the latter conditions
are usually not applicable for inverse and ill-posed problems (somehow
due to arbitrarily small singular values of the operator). Hence, we
focus on ``intermediate'' conditions~\cite{Dossal2005,Denis2009c} and
show how they can be applied to general inverse and ill-posed
problems. Especially we contribute the following point: While it is
good to know that exact recovery is possible for some regularization
parameter, what one really needs is a computable recipe to choose a
parameter which uses only available information and hence, we
especially treat this question on the choice of a regularization
parameter which guarantees exact recovery.


The paper is organized as follows.  In section
\ref{sec_ell1_preliminaries} we summarize some properties of
$\ell^1$-penalized Tikhonov minimizers and recall a stability result
from~\cite{grasmair2008sparseregularization}.  In section
\ref{sec:known-results-from} we review previous results on exact
recovery in the context of sparse recovery and we illustrate our
contribution.  Section~\ref{sec_ell1_ERC} contains the main
theoretical results of the paper. Especially known results on exact
recovery conditions are transfered to ill-posed problems and we give a
parameter choice rule which ensure exact recovery in the presence of
noise (under appropriate assumptions). One novelty here is, that we
focus on conditions and a choice rule which can be verified \emph{a
  priori} and hence are of practical relevance (and not only of
theoretical relevance). In section \ref{sec:relat-betw-recov} we
investigate the relation between the ERC from~\cite{Tropp2006relax},
the source condition and the null space
property~\cite{gribonval2007sparsenessmeasure}. In section
\ref{sec_applications}, we demonstrate the practicability of the
deduced recovery condition with an example from imaging, namely, an
example from digital holography.  In section \ref{sec-ell1_conclusion}
we give a conclusion on exact recovery conditions for Tikhonov
regularization with sparsity constraints.

\section{The $\ell^1$-penalized Tikhonov functional}\label{sec_ell1_preliminaries}
Before we start with error estimates, we recall some basic properties
of the $\ell^1$-penalized Tikhonov functional $T_\alpha$. First we
repeat a trivial characterization of the minimizer.

\begin{proposition}[Optimality condition]\label{optimality_condition_ell1}
  Define the \emph{set-valued sign function}
  $\Sign:\ell^2 \to \big\{\{-1\},[-1,+1],\{+1\}\big\}^\Z$,
  for $u\in\ell^2$, by
  \begin{equation}\label{eq-l1-set-valued-sign}
    \big(\Sign(u)\big)_k :=
    \begin{cases}
      \{-1\}, & u_k<0,\\
      [-1,+1], & u_k=0,\\
      \{+1\}, & u_k>0.
    \end{cases}
  \end{equation}
  Let $u^{\alpha,\varepsilon} \in \ell^2$.
  Then the following statements are equivalent:
  \begin{align}
    \text{(i)}  \qquad & u^{\alpha,\varepsilon} \in \argmin\limits_{u\in\ell^2} T_\alpha(u). \qquad\qquad \label{eq_l1-op-cond_0} \\
    \text{(ii)} \qquad & -K^*(K u^{\alpha,\varepsilon} - g^\varepsilon) \in \alpha \Sign (u^{\alpha,\varepsilon}). \label{eq_l1-op-cond}
  \end{align}
\end{proposition}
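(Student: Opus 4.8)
The plan is to read this off from standard convex subdifferential calculus. Observe first that $T_\alpha$ is a proper, convex, lower semicontinuous functional on $\ell^2$: the data term $F(u):=\tfrac12\norm{Ku-g^\varepsilon}^2_{\hilbert_2}$ is convex and Fréchet differentiable everywhere with gradient $\nabla F(u)=K^*(Ku-g^\varepsilon)$, while the penalty $\alpha\norm{\,\cdot\,}_{\ell^1}$ is convex and lower semicontinuous (it is the pointwise supremum over all finitely supported sign patterns $\sigma\in\{-1,+1\}^\Z$ of the continuous linear functionals $u\mapsto\alpha\sum_{|i|\le n}\sigma_i u_i$, which also explains why the sloppy definition assigning $+\infty$ off $\ell^1$ is the natural one). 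Since $T_\alpha$ is convex, the Fermat rule of convex analysis states that (i) is equivalent to $0\in\partial T_\alpha(u^{\alpha,\varepsilon})$, so the whole task reduces to identifying $\partial T_\alpha$.

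Next I would split the subdifferential using the Moreau--Rockafellar sum rule. Because $F$ is finite and continuous on all of $\ell^2$, the sum rule applies without any further constraint qualification, giving
\[
  \partial T_\alpha(u)=K^*(Ku-g^\varepsilon)+\alpha\,\partial\norm{u}_{\ell^1}
\]
for every $u\in\ell^2$. Since $T_\alpha(0)<\infty$ forces any minimizer into $\dom\norm{\,\cdot\,}_{\ell^1}=\ell^1$, it in fact suffices to compute $\partial\norm{\,\cdot\,}_{\ell^1}$ at points $u\in\ell^1$.

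The decisive computation is then $\partial\norm{u}_{\ell^1}=\set{w\in\ell^2}{w_k\in(\Sign(u))_k \text{ for all } k\in\Z}$. I would verify this directly from the definition of the subgradient: $w\in\partial\norm{u}_{\ell^1}$ means $\norm{v}_{\ell^1}\ge\norm{u}_{\ell^1}+\ip{w}{v-u}$ for all $v\in\ell^2$, and because the $\ell^1$-norm separates over coordinates this inequality decouples into the scalar subgradient inequalities for $t\mapsto|t|$ in each coordinate, whose solution set is exactly the interval $(\Sign(u))_k$. Combining this with the sum rule and rearranging $0\in\partial T_\alpha(u^{\alpha,\varepsilon})$ produces (ii).

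The only genuinely delicate points live in infinite dimensions: justifying the sum rule via the continuity of $F$ rather than an interior-of-domain condition (which would fail, since $\ell^1$ has empty interior in $\ell^2$), and carrying out the coordinatewise decoupling of the subgradient inequality while keeping track of the identification of the dual of $\ell^2$ with $\ell^2$ itself, so that the admissible subgradients $w$ are genuinely square-summable. Everything else is routine, which is why the characterization may fairly be called trivial.
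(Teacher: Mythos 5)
Your proposal is correct and takes essentially the same route as the paper's own proof, which is a one-line argument: note that $\Sign$ is the subdifferential of the (extended) $\ell^1$-norm, so that (ii) is precisely the Fermat condition $0\in\partial T_\alpha(u^{\alpha,\varepsilon})$, which by convexity of $T_\alpha$ is both necessary and sufficient for (i). Your write-up simply makes explicit the ingredients the paper leaves implicit (the Moreau--Rockafellar sum rule justified by continuity of the quadratic term, and the coordinatewise computation of $\partial\norm{\cdot}_{\ell^1}$), so it is a fleshed-out version of the same argument rather than a different one.
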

\begin{proof}
  Since the the set-valued sign function is actually the subgradient
  of the $\ell^1$ norm, the proof consists of noting that (ii) is just
  the optimality condition $0\in\partial
  T_\alpha(u^{\alpha,\varepsilon})$. Due to convexity of $T_\alpha$
  (ii) is also sufficient.
\end{proof}
Another well known characterization of a minimizer $u^{\alpha,\varepsilon}$ is,
that it is a fixed point of
$u^{\alpha,\varepsilon} = \mathbb{S}_\alpha (u^{\alpha,\varepsilon} + K^* (g^\varepsilon - K u^{\alpha,\varepsilon}))$,
where $\mathbb{S}_\alpha$ denotes the soft-thresholding operator,
cf. e.g.~\cite{daubechies2003iteratethresh}.
From this characterization or from~\eqref{eq_l1-op-cond} we can deduce the following:
Since the range of $K^*$ is contained in $\ell^2$,
any minimizer $u^{\alpha,\varepsilon}$ of the 
$\ell^1$-penalized Tikhonov functional $T_\alpha$
is finitely supported for every $\alpha>0$. (Note that this
observation relies on the fact that $K$ is bounded on $\ell^2$. If we
model $K:\ell^1\to\hilbert_2$ boundedly, as appropriate for normalized
dictionaries, we cannot conclude that $u^{\alpha,\varepsilon}$ is
finitely supported, since the adjoint operator maps $K^*:\hilbert_2 \to \ell^\infty$.)

Uniqueness of the minimizer of~(\ref{eq_Tikhonov_ell1}) could be guaranteed by ensuring strict convexity. This holds, e.g., if $K$ is injective. 
A weaker property of the operator $K$, which also guarantees uniqueness 
(although the functional is not strictly convex)
is the FBI~\cite{Bredies2008itersoftconvlinear} property defined below.

\begin{definition}
  Let $K:\ell^2 \to \hilbert_2$ be an operator mapping into a Hilbert space $\hilbert_2$. Then $K$ has the
  \emph{finite basis injectivity (FBI)} property, if for all finite subsets $J\subset\Z$
  the operator restricted to $\spa\set{e_i}{i\in J}$ is injective, i.e.~for all $u,v\in\ell^2$ with $Ku=Kv$ and $u_k=v_k=0$, for all $k\notin J$,
  it follows that $u=v$.
\end{definition}

In inverse problems with sparsity constraints the FBI property
is used for a couple of issues concerning $\ell^p$-penalized Tikhonov functionals,
for example for deduction of stability results~\cite{lorenz2008reglp,grasmair2008sparseregularization,Grasmair_suf_nec_ell1},
for derivation of efficient minimization schemes~\cite{Jin2009e,Griesse2008ssnsparsity},
and for proving convergence of minimization algorithms~\cite{Dahlke2009,Ramlau2011,Bredies2008itersoftconvlinear}.
A demon\-strative example for an operator which possesses the FBI property
but is not fully injective is the following:
Denote with $\{e_i\}_{i=0,\dots}$ the usual real Fourier basis of
$L^2([0,1])$ and with $\{\psi_j\}_{j=1,\dots}$ the Haar wavelet basis
of $L^2([0,1])$ and define the operator $K:\ell^2(\Z)\to L^2([0,1])$
by $Ku = \sum_{j=0}^\infty u_j e_j + \sum_{j=-1}^{-\infty}
u_j\psi_{-j}$. Then $K$ is clearly not injective, since any Haar
wavelet can be expressed in the Fourier basis and vice versa. However,
$K$ obeys the FBI property since neither any Haar wavelet is a finite
linear combination of elements of the Fourier bases nor the other way
round.

The FBI property is related to the so-called \emph{restricted isometry property (RIP)}~\cite{candes2005decoding} of a matrix,
which is a quite common assumption in the theory of compressive sampling~\cite{Candes2007,Candes2006b}.
The RIP is defined as follows.
Let $A$ be a $m\times n$ matrix and let $s<n$ be an integer.
The \emph{restricted isometry constant} of order $s$ is defined 
as the smallest number $0<c_s<1$, such that
the following condition
holds for all $v\in\R^n$ with at most $s$ non-zero entries:
\[
  (1-c_s) \norm{v}_{\ell^2}^2
  \leq \norm{A v}_{\ell^2}^2
  \leq (1+c_s) \norm{v}_{\ell^2}^2.
\]
Essentially, this property denotes that
the matrix is approximately an isometry when restricted to small
subspaces.  The FBI property, however, is defined for operators acting
on the sequence space and only says, that the restriction to finite
dimensional subspaces is still injective and makes no assumption of
the involved constants.

With $\ell^0$ we denote the vector space of all real-valued sequence
with only finitely many non-zero entries. In contrast to the $\ell^p$
spaces with $p>0$ there is no obvious (quasi-)norm available which
turns $\ell^0$ into a (quasi-)Banach space. We will come back to the
issue of defining a suitable topology on $\ell^0$ later.  In general,
the minimum-$\norm{\cdot}_{\ell^1}$ solution $u\sparse$ of $Ku=g$
neither needs to be in $\ell^0$, nor needs to be unique.  If we
assume that there is a finitely supported solution $u\sparse\in\ell^0$
of $Ku=g$, then the set of all solutions of $Ku=g$ is given by
$u\sparse + \ker K$.
If $K$ possesses the FBI property, then the solution $u\sparse$ is the
unique solution in $\ell^0$, hence $\ker K \subset
\ell^2\setminus\ell^0$.  However, in general $u\sparse\in\ell^0$ is
not a minimum-$\norm{\cdot}_{\ell^1}$ solution.  In the following we
assume that $K$ possesses the FBI property and denote the unique
solution of $Ku=g$ in $\ell^0$ with $u\sparse$.

Stability and convergence rates results for $\ell^1$-penalized
Tikhonov functionals have been deduced
in~\cite{daubechies2003iteratethresh,lorenz2008reglp,grasmair2008sparseregularization,Grasmair_suf_nec_ell1}.
The following error estimate
from~\cite{grasmair2008sparseregularization} ensures the linear
convergence to the minimum-$\norm{\cdot}_{\ell^1}$ solution,
if a certain source condition is satisfied.
We state it here in full detail and give explicit constants.

\begin{theorem}[{Error estimate~\cite[theorem 15]{grasmair2008sparseregularization}}]\label{theorem_ell1-convergence-rate}
  Let $K$ possess the FBI property, $u\sparse\in\ell^0$ with $\supp
  u\sparse = I$ be a minimum-$\norm{\cdot}_{\ell^1}$ solution of
  $Ku=g$, and $\norm{g-g^\varepsilon}_{\hilbert_2}\leq\varepsilon$.
  Let the following source condition (SC) be fulfilled:
  \begin{equation}
    \text{there exists $w\in\hilbert_2$ such that}\ K^*w= \xi \in\Sign(u\sparse).
    \label{eq_l1-soucecond-convergence rate}
  \end{equation}
  Moreover, let
  \[
  \theta = \sup \Set{|\xi_k|}{|\xi_k|<1}
  \]
  and $c>0$ such that for all $u\in\ell^2$ with $\supp (u) \subset I$ it holds
  \[
  \norm{Ku}\geq c\norm{u}.
  \]
  Then for the minimizers $u^{\alpha,\varepsilon}$ of $T_\alpha$
  it holds
  \begin{equation}
    \label{eq_l1-error_estimate}
    \norm{u^{\alpha,\varepsilon} - u\sparse}_{\ell^1} \leq \frac{\norm{K}+1}{1-\theta}\frac{\varepsilon^2}{\alpha} + \Bigl(\frac{1}{c} + \norm{w}\frac{\norm{K}+1}{1-\theta}\Bigr)(\alpha + \varepsilon).
  \end{equation}
  Especially, with $\alpha\asymp\varepsilon$ it holds
  \begin{equation}\label{eq_l1-linear_convergence}
    \norm{u^{\alpha,\varepsilon} - u\sparse}_{\ell^1}
    = \bigO(\varepsilon).
  \end{equation}
\end{theorem}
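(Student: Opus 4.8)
The natural route is the source-condition / Bregman-distance technique, with the final error split into its parts on and off the support $I$. I would start from the minimality $T_\alpha(u^{\alpha,\varepsilon}) \le T_\alpha(u\sparse)$; inserting $Ku\sparse = g$ and $\norm{g-g^\varepsilon}_{\hilbert_2}\le\varepsilon$ yields the comparison inequality
\[
  \tfrac12\norm{Ku^{\alpha,\varepsilon}-g^\varepsilon}_{\hilbert_2}^2 + \alpha\norm{u^{\alpha,\varepsilon}}_{\ell^1} \le \tfrac12\varepsilon^2 + \alpha\norm{u\sparse}_{\ell^1}.
\]
Writing $v := u^{\alpha,\varepsilon}-u\sparse$ and using the subgradient $\xi\in\Sign(u\sparse)$ supplied by the source condition, the difference $\norm{u^{\alpha,\varepsilon}}_{\ell^1}-\norm{u\sparse}_{\ell^1}$ equals $D_\xi + \ip{\xi}{v}$, where $D_\xi := \norm{u^{\alpha,\varepsilon}}_{\ell^1}-\norm{u\sparse}_{\ell^1}-\ip{\xi}{v}\ge 0$ is the Bregman distance of the $\ell^1$-norm. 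The decisive point is that the source condition $\xi=K^*w$ converts the linear term into a residual, $\ip{\xi}{v}=\ip{w}{Kv}_{\hilbert_2}=\ip{w}{Ku^{\alpha,\varepsilon}-g}_{\hilbert_2}$, which the quadratic discrepancy term can absorb.

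Next I would carry out this absorption. Splitting $Ku^{\alpha,\varepsilon}-g$ into $(Ku^{\alpha,\varepsilon}-g^\varepsilon)+(g^\varepsilon-g)$ and completing the square in $Ku^{\alpha,\varepsilon}-g^\varepsilon+\alpha w$ (or, equivalently, a Cauchy--Schwarz plus Young estimate) makes the discrepancy cancel against $\tfrac12\norm{Ku^{\alpha,\varepsilon}-g^\varepsilon}_{\hilbert_2}^2$. From the resulting inequality I expect to extract two facts simultaneously: a bound on the Bregman distance, $D_\xi = \bigO(\varepsilon^2/\alpha + \alpha + \varepsilon)$, and --- since $D_\xi\ge 0$ may be dropped --- a discrepancy bound $\norm{Ku^{\alpha,\varepsilon}-g}_{\hilbert_2} = \bigO(\alpha+\varepsilon)$.

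Finally I would translate these into the $\ell^1$-error via $\norm{v}_{\ell^1}=\norm{v_I}_{\ell^1}+\norm{v_{I^\complement}}_{\ell^1}$. Off the support, $u\sparse$ vanishes and $|\xi_k|=1$ on $I$, so the Bregman distance reduces to $D_\xi=\sum_{k\notin I}(1-|\xi_k|)\,|u^{\alpha,\varepsilon}_k|\ge (1-\theta)\norm{v_{I^\complement}}_{\ell^1}$; dividing by $1-\theta$ controls the off-support error by the Bregman bound. On the support, the coercivity $\norm{Kv_I}\ge c\norm{v_I}$ (whose constant $c>0$ exists precisely because the FBI property makes $K$ injective on the finite-dimensional $\spa\set{e_i}{i\in I}$) together with $Kv_I=Kv-Kv_{I^\complement}$ gives $c\norm{v_I}\le\norm{Ku^{\alpha,\varepsilon}-g}_{\hilbert_2}+\norm{K}\norm{v_{I^\complement}}_{\ell^1}$, into which I substitute the discrepancy bound and the off-support bound. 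Adding the two contributions produces an estimate of exactly the shape of \eqref{eq_l1-error_estimate}, and the choice $\alpha\asymp\varepsilon$ collapses every summand to $\bigO(\varepsilon)$, giving \eqref{eq_l1-linear_convergence}.

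The step I expect to be delicate is the passage from the Bregman distance to the true $\ell^1$-norm off the support: the bound $D_\xi\ge(1-\theta)\norm{v_{I^\complement}}_{\ell^1}$ is only useful if $1-\theta>0$ governs \emph{all} off-support indices that carry mass, so indices $k\notin I$ with $|\xi_k|=1$ must be ruled out by the strictness of the source condition (here it helps that $\xi=K^*w\in\ell^2$ forces $|\xi_k|\to 0$). The remaining work --- keeping the dependence on $\norm{w}$ linear through the discrepancy bound rather than a blunt Young estimate, and matching the precise constants $\tfrac{\norm{K}+1}{1-\theta}$ and $\tfrac1c+\norm{w}\tfrac{\norm{K}+1}{1-\theta}$ --- is careful but routine bookkeeping.
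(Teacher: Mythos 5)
The paper itself contains no proof of this theorem: it is imported from Grasmair et al.\ (the cited theorem~15), and the argument there is precisely the Bregman-distance scheme you describe. So your route is not an alternative one but a reconstruction of the proof behind the citation, and most of it is sound: the comparison inequality, the identity $\ip{\xi}{v}=\ip{w}{Ku^{\alpha,\varepsilon}-g}$, completing the square in $Ku^{\alpha,\varepsilon}-g^\varepsilon+\alpha w$ to obtain simultaneously $D_\xi\le(\varepsilon+\alpha\norm{w})^2/(2\alpha)$ and $\norm{Ku^{\alpha,\varepsilon}-g}\le 2\varepsilon+2\alpha\norm{w}$, and the on/off-support splitting are all correct.

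The gap is exactly the step you flagged, and your patch does not close it. The bound $D_\xi\ge(1-\theta)\norm{v_{I^\complement}}_{\ell^1}$ needs $|\xi_k|\le\theta<1$ for \emph{every} $k\notin I$, but the source condition only gives $\xi_k\in[-1,1]$ there, and $\xi=K^*w\in\ell^2$ does \emph{not} exclude $|\xi_k|=1$ at finitely many $k\notin I$: it only makes the active set $J:=\set{k\in\Z}{|\xi_k|=1}\supset I$ finite and guarantees $\theta<1$. At an index $k\in J\setminus I$ the Bregman distance gives no control whatsoever on $|u_k^{\alpha,\varepsilon}|$, since its $k$-th term vanishes whenever $u_k^{\alpha,\varepsilon}$ has the same sign as $\xi_k$. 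The repair is to run your last step with $J$ in place of $I$: the Bregman bound controls $\norm{P_{J^\complement}v}_{\ell^1}$ with the constant $1-\theta$, and the FBI property supplies a coercivity constant $c_J>0$ on the finite-dimensional space $\spa\set{e_j}{j\in J}$, after which the argument closes. The price is that the final estimate carries $c_J$ in place of the stated $c$, and $c_J\le c$ because $I\subset J$; the estimate literally as in \eqref{eq_l1-error_estimate} follows only under the additional assumption $|\xi_k|<1$ for all $k\notin I$ --- exactly the ``strictness'' that reappears in the uniform strict SC~\eqref{eq_uniform_strict_SC} of section~\ref{sec:relat-betw-recov}, and the reason the original reference ties its injectivity hypothesis to the active set of $\xi$ rather than to $\supp u\sparse$. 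Two smaller bookkeeping points: the coercivity must be read as $\norm{Ku}\ge c\norm{u}_{\ell^1}$ (otherwise you pay a factor $\sqrt{|I|}$ passing from $\ell^2$ to $\ell^1$ on $I$), and your splitting yields the off-support coefficient $(1+\norm{K}/c)/(1-\theta)$ rather than $(\norm{K}+1)/(1-\theta)$; both are harmless for the rate $\bigO(\varepsilon)$ but mean the displayed constants are not reproduced verbatim.
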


\begin{remark}
  \upshape
  \begin{enumerate}[a)]
  \item Since $\xi\in\ell^2$ by definition of $K$, it is clear that $\theta<1$.
  \item Since $K$ possesses the FBI property, the existence of $c>0$ is ensured.
  \item To achieve the linear convergence
    rate~(\ref{eq_l1-linear_convergence}), the SC~(\ref{eq_l1-soucecond-convergence rate}) is even
    necessary, cf.~\cite{Grasmair_suf_nec_ell1}.
  \end{enumerate}
\end{remark}

The above theorem is remarkable since it gives an error estimate for
regularization with a sparsity constraints with comparably weak
conditions of the operator, especially nothing is assumed about the
incoherence of $K$ in either way. However, the constants in the error
estimate~\eqref{eq_l1-error_estimate} are both depending on the
unknown quantities $I$, $\norm{w}$ and $\theta$ and are possibly huge
(especially $c$ can be small and $\theta$ can be close to one).

\section{Known results from sparse recovery}
\label{sec:known-results-from}


In~\cite{Tropp2006relax} Tropp deduces a condition which ensures 
exact recovery.
To formulate the statement, we need the following notations.
For a subset $J\subset \Z$, we denote
with $P_J:\ell^2\to\ell^2$ the projection onto $\spa\set{e_i}{i\in J}$,
\[
  P_J u := \textstyle\sum_{j\in J} u_j e_j,
\]
i.e.~the coefficients $j\notin J$ are set to 0 and hence $\supp (P_J u) \subset J$.
With that definition $K P_J : \ell^2 \to \hilbert_2$ modifies the operator $K$ such that $KP_J u$ only depends on the entries $u_i$ for $i\in J$ and hence is something similar to the restriction of K
to  $\spa\set{e_i}{i\in J}$.

Moreover, for a linear operator $B$
we denote the pseudoinverse operator by $B^\dagger$.
With these definitions we are able to formulate Tropp's condition for
exact recovery. 

\begin{theorem}\label{theorem_tropp}
  Let $K$ be bounded and assume that $KP_I$ is injective; let
  $g^\varepsilon_I$ be the orthogonal projection of $g^\varepsilon$ to
  the range of $K P_I$ and denote with $u^\varepsilon_I \in \ell^2$
  the unique element with $\supp (u^\varepsilon_I) \subset I$
  such that $g^\varepsilon_I = K u^\varepsilon_I$.
  
  If the \emph{exact recovery condition} (ERC)
  \begin{equation}
    \label{eq_ERC}
    \sup_{i\in I^\complement} \norm{(K P_I)^\dagger K e_i}_{\ell^1} < 1,
  \end{equation}
  holds, then the parameter choice rule
  \begin{equation}
    \label{eq:para_rule_tropp}
    \frac{\sup_{i\in I^\complement} |\ip{g^\varepsilon - g^\varepsilon_I}{Ke_i}|}
    {1 - \sup_{i\in I^\complement} \norm{(K P_I)^\dagger K e_i}_{\ell^1}}
    < \alpha < 
    \frac{\min\limits_{i\in I} |u^\varepsilon_I (i)|}
    {\norm{(P_I K^* K P_I)^{-1}}_{\ell^1,\ell^1}}
  \end{equation}
  ensures that $\supp (u^{\alpha,\varepsilon}) = \supp (u\sparse)$.
\end{theorem}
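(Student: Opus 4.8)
The plan is to use a primal--dual witness construction: I build the unique candidate solution supported on $I$, read off its optimality condition in terms of $(P_I K^* K P_I)^{-1}$, and then show that the two inequalities in the parameter rule~\eqref{eq:para_rule_tropp} are exactly what is needed to (i) keep the candidate's support equal to $I$ and (ii) certify it as a global minimizer via Proposition~\ref{optimality_condition_ell1}.

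First I would introduce the restricted minimizer $\hat u := \argmin_{\supp(u)\subset I} T_\alpha(u)$. Since $KP_I$ is injective and $I$ is finite, $T_\alpha$ restricted to $\spa\set{e_i}{i\in I}$ is strictly convex and coercive, so $\hat u$ exists and is unique. Its finite-dimensional optimality condition reads $P_I K^*(g^\varepsilon - K\hat u) = \alpha\hat s$ with $\hat s\in\Sign(\hat u)$ supported on $I$. Because $g^\varepsilon - g^\varepsilon_I \perp \rg(KP_I)$ one has $P_I K^* g^\varepsilon = P_I K^* g^\varepsilon_I = P_I K^* K u^\varepsilon_I$, so the condition rearranges to the explicit formula $\hat u = u^\varepsilon_I - \alpha(P_I K^* K P_I)^{-1}\hat s$ on $I$.

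From this formula the upper bound on $\alpha$ controls the support: componentwise $|\hat u_i - u^\varepsilon_I(i)| \le \alpha\,\norm{(P_I K^* K P_I)^{-1}\hat s}_{\ell^\infty} \le \alpha\,\norm{(P_I K^* K P_I)^{-1}}_{\ell^\infty,\ell^\infty}$, and since $P_I K^* K P_I$ is symmetric its $\ell^\infty\to\ell^\infty$ and $\ell^1\to\ell^1$ operator norms coincide. Hence $\alpha < \min_{i\in I}|u^\varepsilon_I(i)| / \norm{(P_I K^* K P_I)^{-1}}_{\ell^1,\ell^1}$ forces $\hat u_i \neq 0$ for every $i\in I$, so $\supp(\hat u) = I$. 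For global optimality I must check $|\ip{g^\varepsilon - K\hat u}{Ke_i}| \le \alpha$ for $i\in I^\complement$. Splitting $g^\varepsilon - K\hat u = (g^\varepsilon - g^\varepsilon_I) + \alpha\,KP_I(P_I K^* K P_I)^{-1}\hat s$ and using the identity $(KP_I)^\dagger K e_i = (P_I K^* K P_I)^{-1}P_I K^* K e_i$ together with symmetry turns the second term into $\alpha\,\ip{\hat s}{(KP_I)^\dagger Ke_i}$, bounded by $\alpha\,\norm{(KP_I)^\dagger Ke_i}_{\ell^1}$ because $\norm{\hat s}_{\ell^\infty}\le 1$. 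Thus $|\ip{g^\varepsilon - K\hat u}{Ke_i}| \le \sup_{j\in I^\complement}|\ip{g^\varepsilon - g^\varepsilon_I}{Ke_j}| + \alpha\sup_{j\in I^\complement}\norm{(KP_I)^\dagger Ke_j}_{\ell^1}$, and the lower bound on $\alpha$ in~\eqref{eq:para_rule_tropp}, which is admissible precisely because the ERC~\eqref{eq_ERC} makes the denominator positive, yields the \emph{strict} inequality $|\ip{g^\varepsilon - K\hat u}{Ke_i}| < \alpha$.

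By Proposition~\ref{optimality_condition_ell1} the candidate $\hat u$ is then a global minimizer with $\supp(\hat u) = \supp(u\sparse) = I$. The remaining point is uniqueness of the support: any minimizer shares the same residual $g^\varepsilon - Ku$ (by strict convexity of the data term as a function of $Ku$), so the strict inequality off $I$ rules out $u_i\neq 0$ for $i\in I^\complement$ through the subgradient condition, and injectivity of $KP_I$ then identifies this minimizer with $\hat u$. I expect the main obstacle to be the bookkeeping around the two operator norms — matching the $\ell^1\to\ell^1$ norm of the parameter rule via the symmetry of $P_I K^* K P_I$, and recognizing that the pseudoinverse identity is what converts the off-support residual into the ERC quantity; getting the strict versus non-strict inequalities right is exactly what makes the final uniqueness step go through.
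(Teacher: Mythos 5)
Your proof is correct, and it is worth noting that the paper itself contains no proof of this theorem: it only remarks that the statement ``can be extracted from'' Tropp's paper (essentially theorem~8 there), so your argument supplies precisely what the paper delegates to the literature. Your route is the canonical primal--dual witness construction, which is also the skeleton of Tropp's original argument: the restricted minimizer $\hat u$ supported on $I$ with the explicit representation $\hat u = u^\varepsilon_I - \alpha (P_I K^* K P_I)^{-1}\hat s$, the upper bound on $\alpha$ keeping every component on $I$ nonzero, and the ERC together with the lower bound on $\alpha$ producing a strict dual certificate on $I^\complement$. The technical points that usually cause trouble are all handled correctly: the identity $P_I K^* (g^\varepsilon - g^\varepsilon_I) = 0$, since the projection residual is orthogonal to $\rg(K P_I)$; the factorization $(K P_I)^\dagger = (P_I K^* K P_I)^{-1} P_I K^*$, valid because $K P_I$ is injective with finite-dimensional, hence closed, range; the norm identity $\norm{(P_I K^* K P_I)^{-1}}_{\ell^\infty,\ell^\infty} = \norm{(P_I K^* K P_I)^{-1}}_{\ell^1,\ell^1}$ by symmetry of the inverse; and, importantly, the final uniqueness step. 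This last step genuinely matters here because the theorem assumes only that $K P_I$ is injective (not that $K$ is injective or has the FBI property), so a priori $T_\alpha$ could have several minimizers; your observation that all minimizers share the same image $Ku$ (strict convexity of the data term in $Ku$), so that the strict certificate inequality transfers to every minimizer and forces its support into $I$, after which injectivity of $K P_I$ collapses the minimizer set to $\{\hat u\}$, is exactly the argument needed --- and it is the place where the strict inequality you track through the lower bound on $\alpha$ pays off.
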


This theorem can be extracted from~\cite{Tropp2006relax} under
slightly different assumptions (basically it is theorem~8 there,
however, this relies on several other results
in~\cite{Tropp2006relax}).

The applicability of this result is limited due to several terms: The
expressions $\norm{(P_I K^* K P_I)^{-1}}_{\ell^1,\ell^1}$ and
$\norm{(K P_I)^\dagger K e_i}_{\ell^1}$, need the knowledge of $I$
which is unknown. Moreover, the quantities $g^\varepsilon_I$ (the
projection of $g^\varepsilon$ onto the range of $KP_I$) and
$u^\varepsilon_I$ (the unique element with $\supp (u^\varepsilon_I) \subset I$ 
such that $g^\varepsilon_I = K u^\varepsilon_I$)
are unknown and not computable
without the knowledge of $I$.

In the following section we deduce a-priori parameter
rules that are easier to use than Tropp's parameter choice
rule~(\ref{eq:para_rule_tropp}). The idea
is to get rid of the expressions $g^\varepsilon_I$ and
$u^\varepsilon_I$, which cannot be estimated a priori.  Furthermore,
we will apply the techniques from~\cite{Gribonval2008,Dossal2005} to
deal with the terms $\norm{(P_I K^* K P_I)^{-1}}_{\ell^1,\ell^1}$ and
$\norm{(K P_I)^\dagger K e_i}_{\ell^1}$.
 
Finally we remark that there are conditions for exact recovery (also
in the presence of noise) which use the so called \emph{coherence} of
the dictionary in~\cite{Fuchs2004,fuchs2005exactsparse} and~\cite[corollary~9]{Tropp2006relax}. The
conditions are much easier to check (since they only rely on inner
products $\ip{Ke_i}{Ke_j}$) but are also much harder to fulfill in
practice.

\section{Beyond convergence rates: exact recovery for ill-posed
  operator equations}\label{sec_ell1_ERC}
In this paragraph we give an a priori parameter rule which ensures
that the unknown support of the sparse solution $u\sparse\in\ell^0$ is
recovered exactly, i.e. $\supp (u^{\alpha,\varepsilon}) = \supp
(u\sparse)$.  We assume that $K$ possesses the FBI property, and hence
$u\sparse$ is the unique solution of $Ku=g$ in $\ell^0$.  With $I$ we
denote the support of $u\sparse$, i.e.
\[
  I := \supp (u\sparse) := \set{i\in\Z}{u_i\sparse\neq 0}.
\]

\begin{theorem}[Lower bound on $\alpha$]\label{theorem_l1-ERC_noise1}
  Let $u\sparse \in \ell^0$, $\supp (u\sparse) = I$, and
  $g^\varepsilon = K u\sparse + \eta$ the noisy data.
  Assume 
  that $K$ is bounded and possesses the FBI property.
  If the following condition holds,
  \begin{equation}\label{eq_l1-ERC_noise}
    \sup\limits_{i\in I^\complement} \norm{(K P_I)^\dagger K e_i}_{\ell^1}
    < 1,
  \end{equation}
  then the parameter rule
  \begin{equation}\label{eq_l1-parameter_rule1}
    \alpha
    > \frac{1 + \sup_{i \in I^\complement} \norm{(KP_I)^\dagger Ke_i}_{\ell^1}}
           {1 - \sup_{i\in I^\complement} \norm{(K P_I)^\dagger K e_i}_{\ell^1}}
    \; \sup\limits_{i\in\Z} |\ip{\eta}{Ke_i}|
  \end{equation}
  ensures that the support of $u^{\alpha,\varepsilon}$ is contained in $I$.
\end{theorem}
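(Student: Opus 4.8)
The plan is to follow the classical Fuchs--Tropp construction: build a candidate solution supported on $I$, show it satisfies the \emph{global} optimality condition of Proposition~\ref{optimality_condition_ell1}, and then use the uniqueness furnished by the FBI property to identify this candidate with the true minimizer $u^{\alpha,\varepsilon}$.

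\emph{Construction of the candidate.} Since $u\sparse\in\ell^0$, the index set $I$ is finite, so the FBI property makes $A:=KP_I$ injective on the finite-dimensional space $\spa\set{e_i}{i\in I}$. Hence the restriction of $T_\alpha$ to $\set{u\in\ell^2}{\supp u\subset I}$ is strictly convex and coercive and possesses a unique minimizer $\tilde u$ with $\supp\tilde u\subset I$. Reading off its (finite-dimensional) optimality condition on the coordinates in $I$ produces
\[
  P_I K^*(K\tilde u - g^\varepsilon) = -\alpha\, s,
\]
for some $s$ supported on $I$ with $\norm{s}_{\ell^\infty}\le 1$ and $s_k=\sign(\tilde u_k)$ whenever $\tilde u_k\neq 0$.

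\emph{The key identity.} Writing $h:=\tilde u - u\sparse$, which is again supported on $I$, and substituting $g^\varepsilon = Ku\sparse + \eta$ gives the residual $K\tilde u - g^\varepsilon = Ah - \eta$. Plugging this into the optimality identity yields the normal equation $A^*Ah = A^*\eta - \alpha s$, and, $A$ being injective, $h = (A^*A)^{-1}(A^*\eta - \alpha s)$. The decisive step --- and the one I expect to be the main obstacle, since this is exactly where the pseudoinverse occurring in the ERC must be produced out of the normal equations --- is to evaluate the residual against $Ke_k$ for $k\in I^\complement$. Using self-adjointness of $(A^*A)^{-1}$ and the identity $(A^*A)^{-1}A^* = A^\dagger = (KP_I)^\dagger$, I would rewrite
\[
  \ip{K\tilde u - g^\varepsilon}{Ke_k}
  = \Ip{A^*\eta - \alpha s}{(KP_I)^\dagger Ke_k} - \ip{\eta}{Ke_k}.
\]

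\emph{Estimation and conclusion.} Abbreviate $\beta:=\sup_{i\in I^\complement}\norm{(KP_I)^\dagger Ke_i}_{\ell^1}$ and $\delta:=\sup_{i\in\Z}|\ip{\eta}{Ke_i}|$. I bound the right-hand side termwise by $\ell^1$/$\ell^\infty$ duality: since $A^*\eta$ is supported on $I$ with entries $\ip{\eta}{Ke_j}$ of modulus at most $\delta$, and $\norm{s}_{\ell^\infty}\le 1$, the three contributions are at most $\beta\delta$, $\alpha\beta$ and $\delta$, so that $|\ip{K\tilde u - g^\varepsilon}{Ke_k}|\le \alpha\beta + (1+\beta)\delta$ for every $k\in I^\complement$. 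The parameter rule~\eqref{eq_l1-parameter_rule1} is tailored precisely so that $\alpha > \tfrac{1+\beta}{1-\beta}\delta$ is equivalent to $\alpha\beta + (1+\beta)\delta < \alpha$; hence $|(K^*(K\tilde u - g^\varepsilon))_k| < \alpha$ on $I^\complement$. Combined with the identity on $I$, this shows $-K^*(K\tilde u - g^\varepsilon)\in\alpha\Sign(\tilde u)$, so by Proposition~\ref{optimality_condition_ell1} the candidate $\tilde u$ is a global minimizer of $T_\alpha$. Finally, since the FBI property guarantees uniqueness of the minimizer, $u^{\alpha,\varepsilon} = \tilde u$, and therefore $\supp(u^{\alpha,\varepsilon})\subset I$.
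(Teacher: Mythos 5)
Your proof is correct, but it takes a genuinely different route from the paper's. The paper does not re-derive support containment at all: it invokes Tropp's result (theorem~\ref{theorem_tropp}), which already guarantees $\supp(u^{\alpha,\varepsilon})\subset I$ under the ERC and the rule $\alpha>\sup_{i\in I^\complement}|\ip{g^\varepsilon-g^\varepsilon_I}{Ke_i}|/(1-\beta)$, where $\beta:=\sup_{i\in I^\complement}\norm{(KP_I)^\dagger Ke_i}_{\ell^1}$; the entire content of the paper's proof is the bridging estimate
\[
  \sup_{i\in I^\complement}|\ip{g^\varepsilon-g^\varepsilon_I}{Ke_i}|
  \leq (1+\beta)\,\sup_{i\in\Z}|\ip{\eta}{Ke_i}|,
\]
obtained by writing $g^\varepsilon-g^\varepsilon_I=(\Id-KP_I(KP_I)^\dagger)\eta$, moving the self-adjoint projection onto $Ke_i$, and applying H\"older, after which the stated rule~\eqref{eq_l1-parameter_rule1} implies Tropp's rule. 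You instead rebuild the argument from scratch: the restricted (oracle) minimizer on $I$, its normal equation $h=(A^*A)^{-1}(A^*\eta-\alpha s)$, the dual bound $\alpha\beta+(1+\beta)\delta<\alpha$ on $I^\complement$, sufficiency of the optimality condition from proposition~\ref{optimality_condition_ell1}, and uniqueness of the minimizer via the FBI property (which the paper asserts in section~\ref{sec_ell1_preliminaries}, so you may rely on it). This is precisely the ``direct proof without using'' Tropp that the paper delegates to~\cite{Trede2010}. The trade-off: the paper's version is shorter and isolates the noise estimate~\eqref{eq_l1-ERC_noise_estimate1}, which it reuses later when comparing its $\varepsilon$ERC with Tropp's condition; your version is self-contained, avoids relying on the transfer of Tropp's finite-dimensional theorem to the Hilbert-space setting (which the paper concedes holds only ``under slightly different assumptions''), and your normal-equation identity is the same computation that drives the error estimate of theorem~\ref{theorem_l1-upperbounds}. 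One point to make explicit if you write this up: the identity $(A^*A)^{-1}A^*=(KP_I)^\dagger$ on the vectors you apply it to uses that $\kr(KP_I)=\spa\set{e_j}{j\notin I}$ exactly, which is where the FBI property enters a second time.
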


\begin{proof}
  In~\cite[theorem 8]{Tropp2006relax}
  it is shown that condition~(\ref{eq_l1-ERC_noise}) together with the parameter rule
  \[
    \alpha >
    \frac{\sup_{i\in I^\complement} |\ip{g^\varepsilon - g^\varepsilon_I}{Ke_i}|}
         {1 - \sup_{i\in I^\complement} \norm{(K P_I)^\dagger K e_i}_{\ell^1}}
  \]
  ensures that the support of $u^{\alpha,\varepsilon} = \argmin T_\alpha(u)$ is contained in $I$.
  Recall that $g^\varepsilon_I$ is the orthogonal projection of $g^\varepsilon$
  to the range of $K P_I$, i.e. $g^\varepsilon_I = K P_I (K P_I)^\dagger g^\varepsilon$.
  Since $\Id - K P_I (K P_I)^\dagger$ equals the orthogonal projection on
  $\rg(K P_I)^\perp$ and $g = K u\sparse \in \rg(K P_I)$, we get
  \[
    g^\varepsilon - g^\varepsilon_I
    = \big(\Id - K P_I (K P_I)^\dagger \big) (g^\varepsilon - g)
    = \big(\Id - K P_I (K P_I)^\dagger \big) (\eta).
  \]
  Hence, using standard identities for the pseudo inverse and
  H\"ol\-der's inequality, we get that for $i\in I^\complement$ it
  holds that
  \begin{align*}
    |\ip{g^\varepsilon - g^\varepsilon_I}{Ke_i}|
    & = |\ip{(\Id - K P_I (K P_I)^\dagger) \, \eta}{Ke_i}|\\
    & = |\ip{\eta}{(\Id - KP_I (KP_I)^\dagger) Ke_i}|\\
    & = |\ip{K^*\eta}{(\Id - (KP_I)^\dagger K) e_i}_{\ell^2}|\\
    & \leq \norm{K^*\eta}_{\ell^\infty} \;
      \norm{(\Id - (KP_I)^\dagger K) e_i}_{\ell^1}.
  \end{align*}
  Since $i\in I^\complement$
  and $\supp((KP_I)^\dagger K e_i) \subset I$ we get
  \[
    \sup\limits_{i \in I^\complement} \norm{e_i - (KP_I)^\dagger Ke_i}_{\ell^1}
    = 1 + \sup\limits_{i \in I^\complement} \norm{(KP_I)^\dagger Ke_i}_{\ell^1}.
  \]
  Hence, using $\norm{K^*\eta}_{\ell^\infty} = \sup_{i\in\Z}
  |\ip{K^*\eta}{e_i}|$ we end in the estimate
  \begin{equation}
    \sup_{i\in I^\complement} |\ip{g^\varepsilon - g^\varepsilon_I}{Ke_i}|
    \leq \sup\limits_{i\in\Z} |\ip{\eta}{Ke_i}| \,
    \big(1 + \sup\limits_{i \in I^\complement} \norm{(KP_I)^\dagger Ke_i}_{\ell^1}\big).
    \label{eq_l1-ERC_noise_estimate1}
  \end{equation}
  Thus, the condition~(\ref{eq_l1-ERC_noise}) together with the parameter choice rule~(\ref{eq_l1-parameter_rule1}) ensures
  that the support is contained in $I$.
  A direct proof without using~\cite{Tropp2006relax}
  can be found in~\cite{Trede2010}.
\end{proof}


\begin{remark}\label{remark_l1-noise-level}
  \upshape
  Instead of using the estimate~(\ref{eq_l1-ERC_noise_estimate1}), one can
  alternatively use
  another more common upper bound for $\sup_{i\in I^\complement} |\ip{g^\varepsilon - g^\varepsilon_I}{Ke_i}|$. 
  For that notice that $\Id - K P_I (K P_I)^\dagger$ 
  is the orthogonal projection on $\rg(K P_I)^\perp$.
  Then, since the norm of orthogonal projections is bounded by 1, we can estimate 
  for $i\in I^\complement$
  with the Cauchy-Schwarz inequality as follows
    \begin{equation}
      |\ip{\eta}{(\Id - K P_I (K P_I)^\dagger) Ke_i}| 
      \leq \norm{\eta}_{\hilbert_2} \norm{(\Id - K P_I (K P_I)^\dagger) Ke_i}_{\hilbert_2}
      \leq \varepsilon\norm{K}.
      \label{eq_l1-ERC_noise_estimate2}
    \end{equation}
  In general, one cannot say which estimate gives a sharper bound, inequality~(\ref{eq_l1-ERC_noise_estimate1}) or inequality~(\ref{eq_l1-ERC_noise_estimate2}).
  However, in practice the noise $\eta$ often is a realization of some random variable e.g.~with symmetric distribution
  and hence, $\sup_{i\in\Z} |\ip{\eta}{Ke_i}|\ll\varepsilon\norm{K}$ seems plausible.
  In this case the estimate with H\"older's inequality~(\ref{eq_l1-ERC_noise_estimate1})
  gives a sharper estimate and
  we use~(\ref{eq_l1-ERC_noise_estimate1}) for the example from
  digital holography in section \ref{sec_applications}.
\end{remark}

Theorem~\ref{theorem_l1-ERC_noise1} gives a lower bound on the regularization
parameter $\alpha$ to ensure $\supp(u^{\alpha,\varepsilon}) \subset \supp(u\sparse)$.
To guarantee $\supp(u^{\alpha,\varepsilon}) = \supp(u\sparse)$
we need an additional upper bound for $\alpha$.
The following theorem leads to that purpose.

\begin{theorem}[Error estimate]\label{theorem_l1-upperbounds}
  Let the assumptions of theorem~\ref{theorem_l1-ERC_noise1} hold
  and choose $\alpha$ according to~(\ref{eq_l1-parameter_rule1}).
  Then the following error estimate is valid:
  \begin{equation}
    \norm{u\sparse - u^{\alpha,\varepsilon}}_{\ell^\infty}
    \leq (\alpha + \sup\limits_{i\in\Z} |\ip{\eta}{Ke_i}|) \norm{(P_I K^* K P_I)^{-1}}_{\ell^1,\ell^1}.
    \label{eq_l1-upperbound}
  \end{equation}
\end{theorem}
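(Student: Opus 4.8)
The plan is to leverage the support localization already proved in Theorem~\ref{theorem_l1-ERC_noise1}: under the present hypotheses, and with $\alpha$ chosen by~(\ref{eq_l1-parameter_rule1}), we have $\supp(u^{\alpha,\varepsilon}) \subset I$. Since $\supp(u\sparse) = I$ as well, the difference $\delta := u^{\alpha,\varepsilon} - u\sparse$ satisfies $\delta = P_I \delta$, so the entire estimate can be carried out on the finite-dimensional subspace $\spa\set{e_i}{i\in I}$. This finite support is precisely what allows the bound to be expressed through the block $P_I K^* K P_I$, which the FBI property renders invertible on $\spa\set{e_i}{i\in I}$ (it is the positive-definite Gram operator of $\set{Ke_i}{i\in I}$).

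First I would translate the optimality condition of Proposition~\ref{optimality_condition_ell1}. As every coordinate of $\Sign(u^{\alpha,\varepsilon})$ lies in $[-1,+1]$, the inclusion $-K^*(K u^{\alpha,\varepsilon} - g^\varepsilon) \in \alpha\,\Sign(u^{\alpha,\varepsilon})$ gives the componentwise bound $|[K^*(K u^{\alpha,\varepsilon} - g^\varepsilon)]_k| \leq \alpha$ for all $k$, in particular for $k\in I$. Substituting $g^\varepsilon = K u\sparse + \eta$ yields $K^*(K u^{\alpha,\varepsilon} - g^\varepsilon) = K^* K \delta - K^*\eta$, and since $\delta = P_I\delta$ we have $P_I K^* K \delta = P_I K^* K P_I \delta$. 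Using $|[K^*\eta]_k| = |\ip{\eta}{Ke_k}| \leq \sup_{i\in\Z}|\ip{\eta}{Ke_i}|$ and the triangle inequality on the coordinates $k\in I$ then produces
\[
  \norm{P_I K^* K P_I \delta}_{\ell^\infty} \leq \alpha + \sup_{i\in\Z}|\ip{\eta}{Ke_i}|.
\]

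Next I would invert on $\spa\set{e_i}{i\in I}$, writing $\delta = (P_I K^* K P_I)^{-1}\,(P_I K^* K P_I \delta)$ and passing to operator norms, which gives $\norm{\delta}_{\ell^\infty} \leq \norm{(P_I K^* K P_I)^{-1}}_{\ell^\infty,\ell^\infty}\,(\alpha + \sup_{i\in\Z}|\ip{\eta}{Ke_i}|)$. The only step requiring a little care is replacing the $\ell^\infty$-operator norm by the $\ell^1$-operator norm appearing in~(\ref{eq_l1-upperbound}). Here the self-adjointness of $P_I K^* K P_I$, hence of its inverse, is essential: for a matrix the $\ell^\infty\!\to\!\ell^\infty$ operator norm is the maximal absolute row sum while the $\ell^1\!\to\!\ell^1$ operator norm is the maximal absolute column sum, and for a symmetric matrix the two coincide, so that $\norm{(P_I K^* K P_I)^{-1}}_{\ell^\infty,\ell^\infty} = \norm{(P_I K^* K P_I)^{-1}}_{\ell^1,\ell^1}$. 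Combining this with the previous display and $\norm{\delta}_{\ell^\infty} = \norm{u\sparse - u^{\alpha,\varepsilon}}_{\ell^\infty}$ gives exactly~(\ref{eq_l1-upperbound}). I expect this norm-duality bookkeeping to be the only delicate point; everything else follows directly from the support localization of Theorem~\ref{theorem_l1-ERC_noise1} and the subgradient inequality.
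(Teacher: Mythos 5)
Your proposal is correct and follows essentially the same route as the paper's proof: support localization from theorem~\ref{theorem_l1-ERC_noise1}, the optimality condition to bound $P_IK^*(Ku^{\alpha,\varepsilon}-g^\varepsilon)$ componentwise by $\alpha$, the identity $P_IK^*KP_I\delta = -\alpha P_Iw + P_IK^*\eta$, and inversion of the Gram operator on $\spa\set{e_i}{i\in I}$. Your row-sum/column-sum symmetry argument is just a repackaging of the paper's step, where self-adjointness of $(P_IK^*KP_I)^{-1}$ is used to move the inverse onto $e_j$ and H\"older's inequality then produces the $\ell^1\!\to\!\ell^1$ norm as a maximal column sum.
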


\begin{proof}
  From the assumptions of theorem~\ref{theorem_l1-ERC_noise1}
  we have
  $\supp(u^{\alpha,\varepsilon}) \subset \supp(u\sparse)$.
  From the optimality condition~(\ref{eq_l1-op-cond}) we know that
  for $u^{\alpha,\varepsilon}$
  there is a $w\in\ell^\infty$ with $\norm{w}_{\ell^\infty}\leq1$ such that
  \[
    -K^*(Ku^{\alpha,\varepsilon} - g^\varepsilon) = \alpha \, w.
  \]
  Hence, it holds that
  \begin{align*}
    - P_I K^* K P_I (u^{\alpha,\varepsilon} - u\sparse)
    & = - P_I K^* (K u^{\alpha,\varepsilon} - g)
      = - P_I K^* (K u^{\alpha,\varepsilon} - g^\varepsilon) - P_I K^* \eta \\
    & = \alpha P_I w - P_I K^* \eta.
  \end{align*}
  Since $\norm{w}_{\ell^\infty}\leq 1$,
  with H\"ol\-der's inequality
  we can estimate for all $j\in I$
  \begin{align*}
    |(u^{\alpha,\varepsilon} - u\sparse)_j|
    & = |\ip{u^{\alpha,\varepsilon} - u\sparse}{e_j}|
      = \big|\Ip{(P_I K^* K P_I)^{-1} \big(\alpha P_I w - P_I K^* \eta\big)}{e_j}\big|\\
    & \leq \alpha |\ip{P_I w}{(P_I K^* K P_I)^{-1} e_j}| + |\ip{P_I K^* \eta}{(P_I K^* K P_I)^{-1} e_j}|\\
    & \leq \big(\alpha \norm{w}_{\ell^\infty} + \norm{P_IK^* \eta}_{\ell^\infty} \big)
      \norm{(P_I K^* K P_I)^{-1}}_{\ell^1,\ell^1}\\
    & \leq \big(\alpha + \sup\limits_{i\in\Z} |\ip{\eta}{Ke_i}|\big)
      \norm{(P_I K^* K P_I)^{-1}}_{\ell^1,\ell^1}.
  \end{align*}
  ~\vspace{-6ex}

\end{proof}

\begin{remark}
  \upshape
  Due to the error estimate~(\ref{eq_l1-upperbound}) 
  we achieve a linear convergence rate measured in the $\ell^\infty$ norm.
  In finite dimensions the $\ell^p$ norms are equivalent,
  hence  we also get an estimate for the $\ell^1$ error:
  \[
  \norm{u\sparse - u^{\alpha,\varepsilon}}_{\ell^1} \leq (\alpha +
  \varepsilon\norm{K}) \; |I| \; \norm{(P_I K^* K P_I)^{-1}}_{\ell^1,\ell^1}.
  \]
  Compared to the estimate~\eqref{eq_l1-error_estimate} from
  theorem~\ref{theorem_ell1-convergence-rate}, the quantities $\theta$
  and $\norm{w}$ are not present anymore. The role of $1/c$ is now
  played by $\norm{(P_I K^* K P_I)^{-1}}_{\ell^1,\ell^1}$.  However,
  if upper bounds on $I$ or on its size (together with structural
  information on $K$) is available, our estimate can give a-priori
  checkable error estimates.
\end{remark}

The following theorem gives a 
sufficient condition for the existence of a regularization parameter $\alpha$
which provides exact recovery.
Due to theorem~\ref{theorem_l1-upperbounds}, equation~(\ref{eq_l1-upperbound}), the regularization parameter
should be chosen as small as possible.

\begin{theorem}[Exact recovery condition in the presence of noise]\label{theorem_l1-ERC_noise}
  Let $u\sparse \in \ell^0$ with $\supp (u\sparse) = I$ and 
  $g^\varepsilon = K u\sparse + \eta$ the noisy data with noise-to-signal ratio
  \[
    \nsr :=
    \frac{\sup\limits_{i\in\Z} |\ip{\eta}{Ke_i}|}
         {\min\limits_{i\in I} |u_i\sparse|}.
  \]
  Assume that the operator $K$ is bounded and possesses the FBI property.
  Then the \emph{exact recovery condition in the presence of noise ($\varepsilon$ERC)}
  \begin{equation}\label{eq_l1-ERC_noise2}
    \sup\limits_{i\in I^\complement} \norm{(K P_I)^\dagger K e_i}_{\ell^1}
    < 1 - 2 \nsr \norm{(P_I K^* K P_I)^{-1}}_{\ell^1,\ell^1}
  \end{equation}
  ensures that there is a
  suitable regularization parameter $\alpha$,
  \begin{align}
    \frac{1 + \sup_{i \in I^\complement} \norm{(KP_I)^\dagger Ke_i}_{\ell^1}}
         {1 - \sup_{i\in I^\complement} \norm{(K P_I)^\dagger K e_i}_{\ell^1}}
    &\; \sup\limits_{i\in\Z} |\ip{\eta}{Ke_i}|
    < \alpha \label{eq_l1-parameter_rule2}\\
    \alpha & <
    \frac{\min\limits_{i\in I} |u_i\sparse|}
         {\norm{(P_I K^* K P_I)^{-1}}_{\ell^1,\ell^1}}
    - \sup\limits_{i\in\Z} |\ip{\eta}{Ke_i}|,
    \notag
  \end{align}
  which provides exact recovery of $I$,
  i.e.~the support of the minimizer $u^{\alpha,\varepsilon}$ 
  coincides with $\supp(u\sparse)=I$.
\end{theorem}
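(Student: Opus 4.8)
The plan is to assemble the statement directly from the two preceding results, Theorem~\ref{theorem_l1-ERC_noise1} and Theorem~\ref{theorem_l1-upperbounds}, which already supply the inclusion $\supp(u^{\alpha,\varepsilon})\subset I$ and a quantitative $\ell^\infty$ error bound, respectively. The decisive observation is that the $\varepsilon$ERC~(\ref{eq_l1-ERC_noise2}) is nothing but the statement that the admissible interval for $\alpha$ in~(\ref{eq_l1-parameter_rule2}) is nonempty; once this is established, any $\alpha$ drawn from that interval will simultaneously force the support of $u^{\alpha,\varepsilon}$ to lie inside $I$ and to contain $I$.

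First I would abbreviate $S := \sup_{i\in I^\complement}\norm{(KP_I)^\dagger Ke_i}_{\ell^1}$, $N := \sup_{i\in\Z}|\ip{\eta}{Ke_i}|$, $M := \norm{(P_I K^* K P_I)^{-1}}_{\ell^1,\ell^1}$ and $m := \min_{i\in I}|u_i\sparse|$, so that $\nsr = N/m$. The $\varepsilon$ERC then reads $S < 1 - 2\nsr M$, which in particular gives $S<1$, so the hypothesis~(\ref{eq_l1-ERC_noise}) of Theorem~\ref{theorem_l1-ERC_noise1} is satisfied. Nonemptiness of the interval~(\ref{eq_l1-parameter_rule2}) means $\tfrac{1+S}{1-S}N < \tfrac{m}{M} - N$. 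Adding $N$ to both sides and using the identity $\tfrac{1+S}{1-S}+1 = \tfrac{2}{1-S}$, this is equivalent to $\tfrac{2N}{1-S} < \tfrac{m}{M}$, i.e.~to $1-S > 2\tfrac{N}{m}M = 2\nsr M$, which is precisely~(\ref{eq_l1-ERC_noise2}). This short algebraic identity is the heart of the theorem and shows that the $\varepsilon$ERC is not merely sufficient but is exactly the nonemptiness condition for the parameter window.

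Next I would fix any $\alpha$ satisfying~(\ref{eq_l1-parameter_rule2}). Since it exceeds the lower bound, it meets the parameter rule~(\ref{eq_l1-parameter_rule1}), and Theorem~\ref{theorem_l1-ERC_noise1} yields $\supp(u^{\alpha,\varepsilon})\subset I$. This same $\alpha$ also verifies the hypotheses of Theorem~\ref{theorem_l1-upperbounds}, whose error estimate~(\ref{eq_l1-upperbound}) gives $\norm{u\sparse - u^{\alpha,\varepsilon}}_{\ell^\infty} \le (\alpha + N)M$. For each $i\in I$ the reverse triangle inequality then yields $|u_i^{\alpha,\varepsilon}| \ge m - (\alpha+N)M$, and the upper bound $\alpha < \tfrac{m}{M}-N$ from~(\ref{eq_l1-parameter_rule2}) is exactly the inequality $(\alpha+N)M < m$. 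Hence $|u_i^{\alpha,\varepsilon}| > 0$ for every $i\in I$, so $I\subset\supp(u^{\alpha,\varepsilon})$; combined with the reverse inclusion this gives $\supp(u^{\alpha,\varepsilon}) = I$, which is exact recovery.

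I do not expect a genuine obstacle here: the two analytically substantial facts (the support containment and the $\ell^\infty$ estimate) are already available, so the only real content is the observation that the lower and upper bounds on $\alpha$ become compatible precisely under~(\ref{eq_l1-ERC_noise2}). The point deserving the most care is simply the bookkeeping that every $\alpha$ in~(\ref{eq_l1-parameter_rule2}) meets the hypotheses of both earlier theorems at once — in particular that the lower endpoint of~(\ref{eq_l1-parameter_rule2}) coincides with the parameter rule~(\ref{eq_l1-parameter_rule1}) required for the error estimate of Theorem~\ref{theorem_l1-upperbounds} to hold.
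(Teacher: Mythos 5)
Your proposal is correct and follows essentially the same route as the paper's own proof: the lower bound in~(\ref{eq_l1-parameter_rule2}) invokes theorem~\ref{theorem_l1-ERC_noise1} to get $\supp(u^{\alpha,\varepsilon})\subset I$, the upper bound combined with the estimate~(\ref{eq_l1-upperbound}) of theorem~\ref{theorem_l1-upperbounds} forces $|u_j^{\alpha,\varepsilon}|>0$ on $I$, and the $\varepsilon$ERC~(\ref{eq_l1-ERC_noise2}) is exactly the nonemptiness of the parameter window. Your only addition is spelling out the algebraic identity $\tfrac{1+S}{1-S}+1=\tfrac{2}{1-S}$ behind the nonemptiness claim, which the paper asserts without computation.
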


\begin{proof}
  The lower bound of the parameter rule~(\ref{eq_l1-parameter_rule2}) ensures that $\supp (u^{\alpha,\varepsilon}) \subset I$.
  With the error estimate~(\ref{eq_l1-upperbound}) we see that
  the upper bound from the parameter rule~(\ref{eq_l1-parameter_rule2})
  guarantees for $j\in I$ that
  \[
    |u_j\sparse| - |u_j^{\alpha,\varepsilon}|
    \leq |u_j\sparse - u_j^{\alpha,\varepsilon}|
    < \min\limits_{i\in I} |u_i\sparse|,
  \]
  hence $|u_j^{\alpha,\varepsilon}|>|u_j\sparse| - \min_{i\in I} |u_i\sparse| \geq 0$ for all $j\in I$.
  The $\varepsilon$ERC~(\ref{eq_l1-ERC_noise2}) ensures that
  the interval of convenient regularization parameters $\alpha$
  resulting from~(\ref{eq_l1-parameter_rule2})
  is not empty.
\end{proof}

\begin{remark}
  \upshape
  Theorem~\ref{theorem_l1-ERC_noise} gives a parameter choice rule that works without
  the quantities $g^\varepsilon_I$ and $u^\varepsilon_I$.
  In theorem~\ref{theorem_tropp},
  the existence of a parameter that ensures exact recovery is guaranteed on the condition
  \begin{equation}
    \sup_{i\in I^\complement} \norm{(K P_I)^\dagger K e_i}_{\ell^1}
    < 1 - 
    \frac{\sup_{i\in I^\complement} |\ip{g^\varepsilon - g^\varepsilon_I}{Ke_i}|}
    {\min_{i\in I} |u^\varepsilon_I (i)|}
    {\norm{(P_I K^* K P_I)^{-1}}_{\ell^1,\ell^1}},
    \label{eq:Tropp_epsERC}
  \end{equation}
  cf. equation~(\ref{eq:para_rule_tropp}).
  The question remains which condition is sharper,~(\ref{eq:Tropp_epsERC})
  or~(\ref{eq_l1-parameter_rule2})?
  The conditions look similar
  with different factors on the right-hand side, however,
  it is not obvious, which condition is shaper:
  Indeed, provided that 
  $\sup_{i\in I^\complement} \norm{(K P_I)^\dagger K e_i}_{\ell^1}
  < 1$ holds, by using equation~(\ref{eq_l1-ERC_noise_estimate1}) 
  we can estimate
  \[
  	\frac{\sup_{i\in I^\complement} |\ip{g^\varepsilon - g^\varepsilon_I}{Ke_i}|}
    {\min_{i\in I} |u^\varepsilon_I (i)|}
	<
    2 \, \frac{\sup_{i\in\Z} |\ip{\eta}{Ke_i}|}
    {\min_{i\in I} |u^\varepsilon_I (i)|}.
  \]
  However, it is not clear which expression, 
  $\min_{i\in I} |u^\varepsilon_I (i)|$ or 
  $\min_{i\in I} |u_i\sparse|$, is smaller:
  Recall,
  $g^\varepsilon_I$ is the orthogonal projection of 
  $g^\varepsilon$ to
  the range of $K P_I$, and $u^\varepsilon_I$ is
  the unique element with $\supp (u^\varepsilon_I) \subset I$
  such that $g^\varepsilon_I = K u^\varepsilon_I$.  
  Now, the noise $\eta$ with $g^\varepsilon = Ku\sparse + \eta$ can
  cause an increase or decrease of the values $u\sparse_j$ and hence,
  the coefficients $u_I^\varepsilon(j)$, $j\in I$ can be larger or
  smaller.
\end{remark}  

The results of theorem~\ref{theorem_l1-upperbounds}
and~\ref{theorem_l1-ERC_noise} can be rephrased as follows: If the
regularization parameter $\alpha(\varepsilon)$ is chosen according to
(\ref{eq_l1-parameter_rule2}) and fulfills $\alpha\asymp\varepsilon$,
then $\norm{u^{\alpha,\varepsilon}-u\sparse}_{\ell^1}=\bigO(\varepsilon)$ and
the support of $u^{\alpha,\varepsilon}$ coincides with that of
$u\sparse$. Indeed, this can be interpreted as convergence in the space
$\ell^0$ with respect to the following topology.
\begin{definition}
  We equip the spaces $\R^n$ with the Euclidean topology and consider
  them ordered by inclusion $\R^n\subset\R^{n+1}$ in the natural way.
  Then an absolutely convex and absorbent subset $U$ of $\ell^0$ is
  called a neighborhood of $0$ if set $U\cap\R^n$ is open in
  $\R^n$ for any $n$. The topology $\tau$ on $\ell^0$ which is
  generated by the local base of these neighborhoods is called the
  \emph{topology of sparse convergence}.
\end{definition}

The definition above says that the topology of sparse convergence is
generated as strict inductive limit of the spaces $\R^n$. The space
$\ell^0$ is turned into a complete locally convex vector space with
this topology. A sequence $(u^n)$ in $\ell^0$ converges to $u$ in this
topology if there is a finite set $I\subset\N$ such that $\supp
u^n\subset I$, for all $n\in\N$, and the sequence $u^n$ converges componentwise. As a
strict inductive limit of Fr\'echet spaces, $(\ell^0,\tau)$ is also
called an LF-space and is known to be not normable,
see~\cite{Dieudonne1949,Narici1985}. The topology of sparse
convergence resembles the topology on the space of test functions
$\mathcal{D}(\Omega)$ in distribution theory. (This correspondence can
be pushed a little bit further by observing that the dual space of
$(\ell^0,\tau)$ is the space of all real valued sequences and plays
the rule of the space of distributions. We will not pursue this
similarity further here.)
\begin{corollary}[Convergence in $\ell^0$]
  In the situation of theorem~\ref{theorem_l1-ERC_noise} assume that
  $\alpha$ is chosen to fulfill the parameter choice
  rule~\eqref{eq_l1-parameter_rule2}. Then $u^{\alpha,\varepsilon}\to
  u\sparse$ in $\ell^0$ in the topology of sparse convergence.
\end{corollary}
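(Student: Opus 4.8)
The plan is to check directly the two defining requirements of the topology of sparse convergence, both of which are already supplied by theorems~\ref{theorem_l1-ERC_noise} and~\ref{theorem_l1-upperbounds}. By definition, a family in $\ell^0$ converges to $u\sparse$ in $\tau$ exactly when (a) there is a single fixed finite set containing the supports of all members of the family as well as the support of the limit, and (b) the family converges componentwise. I would establish (a) and (b) separately and then read off convergence in $(\ell^0,\tau)$.

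For (a) I would invoke theorem~\ref{theorem_l1-ERC_noise}: under the $\varepsilon$ERC~\eqref{eq_l1-ERC_noise2} and the parameter choice~\eqref{eq_l1-parameter_rule2}, the minimizer obeys $\supp(u^{\alpha,\varepsilon}) = I = \supp(u\sparse)$. Thus every member of the family and the limit are supported in the one fixed finite set $I$, so the whole family lives in the finite-dimensional subspace $\spa\set{e_i}{i\in I}\cong\R^{|I|}$. This is precisely (a), and it is the step where the exact-support result does the essential work: without it the supports could spread and no single $I$ would capture the family.

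For (b) I would use the error estimate~\eqref{eq_l1-upperbound},
\[
\norm{u\sparse - u^{\alpha,\varepsilon}}_{\ell^\infty} \leq \big(\alpha + \sup_{i\in\Z}|\ip{\eta}{Ke_i}|\big)\,\norm{(P_I K^* K P_I)^{-1}}_{\ell^1,\ell^1},
\]
in which the constant $\norm{(P_I K^* K P_I)^{-1}}_{\ell^1,\ell^1}$ depends only on $I$ and $K$ and is therefore fixed. As $\varepsilon\to 0$ one has $\sup_{i\in\Z}|\ip{\eta}{Ke_i}| = \norm{K^*\eta}_{\ell^\infty}\leq\norm{K}\,\varepsilon\to 0$, and, choosing $\alpha\asymp\varepsilon$ as in the preceding rephrasing, also $\alpha\to 0$; hence the right-hand side tends to $0$. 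So $u^{\alpha,\varepsilon}\to u\sparse$ in $\ell^\infty$, which on the fixed finite support $I$ coincides with componentwise convergence, giving (b).

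It remains only to combine (a) and (b) through the structure of the topology. Since all elements lie in the single finite-dimensional subspace $\spa\set{e_i}{i\in I}$ and, as a strict inductive limit, $\tau$ restricts to the Euclidean topology on each such subspace, convergence in $\ell^\infty$ there is nothing but convergence in $(\ell^0,\tau)$. I do not anticipate a real obstacle: theorems~\ref{theorem_l1-ERC_noise} and~\ref{theorem_l1-upperbounds} carry all the analytic content, and the only point needing care is the purely topological remark that, for a family contained in one finite-dimensional subspace, convergence in the LF-topology reduces to ordinary convergence in that subspace.
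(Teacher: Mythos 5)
Your proof is correct and follows essentially the same route as the paper: exact support recovery from theorem~\ref{theorem_l1-ERC_noise} pins all supports inside the fixed finite set $I$, the error estimate~\eqref{eq_l1-upperbound} of theorem~\ref{theorem_l1-upperbounds} (with $\alpha\asymp\varepsilon$, as in the paper's rephrasing preceding the corollary) gives componentwise convergence, and the stated characterization of convergence in the strict inductive limit topology combines the two. Your explicit remark that $\alpha$ must additionally tend to zero, which the corollary's wording leaves implicit, is a point of care the paper glosses over rather than a deviation from its argument.
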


In fact, the parameter choice rule~(\ref{eq_l1-parameter_rule2}) is
not an a priori parameter rule $\alpha=\alpha(\varepsilon)$, since it
depends on the noise $\eta$ and on unknown quantities such as $I$ and
$\min_i |u\sparse_i|$.  However, the term $\sup_{i\in\Z}
|\ip{\eta}{Ke_i}|$ is related to the noise level and it can be
estimated by $\varepsilon\norm{K}$,
cf.~remark~\ref{remark_l1-noise-level}. The term $\min_i
|u\sparse_i|$, i.e.~the smallest non-zero entry in the unknown
solution, may be estimated from below in several applications.  Due to
the expressions $\norm{(P_I K^* K P_I)^{-1}}_{\ell^1,\ell^1}$ and
$\sup_{i\in I^\complement} \norm{(K P_I)^\dagger K e_i}_{\ell^1}$, the
$\varepsilon$ERC~(\ref{eq_l1-ERC_noise2}) is hard to evaluate,
especially since the support $I$ is unknown.  Therefore, we
follow\cite{Dossal2005} and give another sufficient recovery condition
which is on the one hand weaker in the sense that it is easier to
satisfy (and implies the $\varepsilon$ERC and hence, is less powerful
than the ERC) and on the other hand is easier to evaluate in practice
since it only depends on inner products of images of $K$ restricted to
$I$ and $I^\complement$.  For the sake of an easier presentation we
define according to~\cite{Gribonval2008,Dossal2005}
\[
  \CI  := \sup\limits_{i\in I} \sum\limits_{\sumstack{j\in I}{j\neq i}} |\ip{K e_i}{K e_j}|
  \qquad \mbox{and} \qquad
  \CIC := \sup\limits_{i\in I^\complement} \sum\limits_{j\in I} |\ip{K e_i}{K e_j}|.
\]

\begin{theorem}[Neumann exact recovery condition in the presence of noise]\label{theorem_l1-NeumannERC_noise}
  Let $u\sparse \in \ell^0$ with $\supp (u\sparse) = I$ and 
  $g^\varepsilon = K u\sparse + \eta$ the noisy data 
  with noise-to-signal ratio $\nsr$.
  Assume that the operator norm of $K$ is bounded by 1 and that $K$ possesses the FBI property.
  Then the \emph{Neumann exact recovery condition in the presence of noise (Neumann $\varepsilon$ERC)}
  \begin{equation}
    \CI + \CIC
    < \min\limits_{i\in I} \norm{K e_i}_{\hilbert_2}^2 \, - \, 2 \nsr
    \label{eq_l1-00-NeumannERC_noise2}
  \end{equation}
  ensures that there is a
  suitable regularization parameter $\alpha$,
  \begin{align}
    & \frac{\min\limits_{i\in I} \norm{K e_i}_{\hilbert_2}^2  - \CI + \CIC}
         {\min\limits_{i\in I} \norm{K e_i}_{\hilbert_2}^2  - \CI - \CIC}
    \; \sup\limits_{i\in\Z} |\ip{\eta}{Ke_i}|
    < \alpha \label{eq_l1-00-Neumannparameter_rule2}\\
    & \qquad\qquad\qquad \alpha <
    \big(\min\limits_{i\in I} \norm{K e_i}_{\hilbert_2}^2 - \CI\big)
    \min\limits_{i\in I} |u_i\sparse| - \sup\limits_{i\in\Z} |\ip{\eta}{Ke_i}|,
    \notag
  \end{align}
  which provides exact recovery of $I$,
  i.e.~the support of $u^{\alpha,\varepsilon}$ 
  coincides with $\supp(u\sparse)=I$.
\end{theorem}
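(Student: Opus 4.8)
The plan is to derive Theorem~\ref{theorem_l1-NeumannERC_noise} as a consequence of Theorem~\ref{theorem_l1-ERC_noise} by replacing the two awkward quantities appearing in the $\varepsilon$ERC, namely $\norm{(P_I K^* K P_I)^{-1}}_{\ell^1,\ell^1}$ and $\sup_{i\in I^\complement}\norm{(KP_I)^\dagger Ke_i}_{\ell^1}$, with the more accessible correlation quantities $\CI$ and $\CIC$, following the Neumann series technique of~\cite{Gribonval2008,Dossal2005}. Writing $m := \min_{i\in I}\norm{Ke_i}_{\hilbert_2}^2$, I would establish the two estimates
\begin{equation*}
  \norm{(P_I K^* K P_I)^{-1}}_{\ell^1,\ell^1} \leq \frac{1}{m - \CI}
  \qquad\text{and}\qquad
  \sup_{i\in I^\complement}\norm{(KP_I)^\dagger Ke_i}_{\ell^1} \leq \frac{\CIC}{m - \CI},
\end{equation*}
both of which presuppose $\CI < m$; this in turn is guaranteed by the Neumann $\varepsilon$ERC~\eqref{eq_l1-00-NeumannERC_noise2}, which forces $\CI + \CIC < m - 2\nsr \leq m$.

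First I would bound the inverse Gram matrix. Identifying $G := P_I K^* K P_I$ with the symmetric matrix $(\ip{Ke_i}{Ke_j})_{i,j\in I}$, I split $G = (\Id + R\Delta^{-1})\Delta$, where $\Delta$ is the diagonal part with entries $\norm{Ke_i}_{\hilbert_2}^2$ and $R = G - \Delta$ collects the off-diagonal entries. Since the $\ell^1\to\ell^1$ operator norm is the maximal absolute column sum and $G$ is symmetric, the $j$-th column sum of $R\Delta^{-1}$ equals $\norm{Ke_j}_{\hilbert_2}^{-2}\sum_{i\in I,\,i\neq j}|\ip{Ke_i}{Ke_j}| \leq \CI/m < 1$, so $\norm{R\Delta^{-1}}_{\ell^1,\ell^1}\leq \CI/m$. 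The Neumann series $G^{-1} = \Delta^{-1}\sum_{k\geq 0}(-R\Delta^{-1})^k$ then converges and, together with $\norm{\Delta^{-1}}_{\ell^1,\ell^1} = 1/m$, yields $\norm{G^{-1}}_{\ell^1,\ell^1} \leq \tfrac{1}{m}\,(1-\CI/m)^{-1} = \tfrac{1}{m-\CI}$, which is the first estimate. For the second estimate I would use the identity $(KP_I)^\dagger = G^{-1} P_I K^*$, valid because $KP_I$ has finite-dimensional (hence closed) range and is injective on $\spa\set{e_i}{i\in I}$ under the FBI property. For $i\in I^\complement$ the vector $P_I K^* Ke_i$ has entries $\ip{Ke_j}{Ke_i}$, $j\in I$, whose $\ell^1$-norm is at most $\CIC$ by definition; hence $\norm{(KP_I)^\dagger Ke_i}_{\ell^1} \leq \norm{G^{-1}}_{\ell^1,\ell^1}\sum_{j\in I}|\ip{Ke_j}{Ke_i}| \leq \CIC/(m-\CI)$, and taking the supremum gives the claim.

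Finally I would substitute both bounds into Theorem~\ref{theorem_l1-ERC_noise}. Because $s\mapsto (1+s)/(1-s)$ is increasing on $[0,1)$, the second estimate shows the lower endpoint in~\eqref{eq_l1-parameter_rule2} is dominated by the lower endpoint in~\eqref{eq_l1-00-Neumannparameter_rule2}, while $\norm{G^{-1}}_{\ell^1,\ell^1}^{-1} \geq m-\CI$ shows the upper endpoint in~\eqref{eq_l1-parameter_rule2} dominates that in~\eqref{eq_l1-00-Neumannparameter_rule2}. Thus the Neumann parameter interval is contained in the $\varepsilon$ERC interval, so any admissible $\alpha$ from~\eqref{eq_l1-00-Neumannparameter_rule2} triggers exact recovery through Theorem~\ref{theorem_l1-ERC_noise}. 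A short computation then shows that non-emptiness of the Neumann interval is \emph{equivalent} to~\eqref{eq_l1-00-NeumannERC_noise2}: dividing the required strict inequality by $\min_{i\in I}|u_i\sparse|$, writing $a := m-\CI$ and $b := \CIC$, and collecting terms reduces it to $2\nsr < a - b$, i.e.\ precisely $\CI + \CIC < m - 2\nsr$. The main obstacle is the first step, namely choosing the multiplicative splitting of $G$ so that the relevant $\ell^1\to\ell^1$ norm comes out as exactly $\CI/m$ and the Neumann series converges; once the two estimates are in place, the remainder is monotone bookkeeping on the two nested parameter intervals.
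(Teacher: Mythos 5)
Your proposal is correct and takes essentially the same route as the paper: the paper's own (two-line) proof reduces the statement to theorem~\ref{theorem_l1-ERC_noise} by splitting $P_IK^*KP_I$ into its diagonal and off-diagonal parts and bounding the inverse with a Neumann series following~\cite{Gribonval2008,Dossal2005}, which is exactly your argument with all details filled in. The only (harmless) deviation is that your multiplicative splitting $G=(\Id+R\Delta^{-1})\Delta$ normalizes by the true diagonal and thus never actually invokes the hypothesis $\norm{K}\le 1$, which the paper's sketch cites for the comparison with the identity.
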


\begin{proof}
  For the deduction of conditions~(\ref{eq_l1-00-NeumannERC_noise2}) and~(\ref{eq_l1-00-Neumannparameter_rule2}) 
  from conditions~(\ref{eq_l1-ERC_noise2}) and~(\ref{eq_l1-parameter_rule2}),
  respectively, 
  one splits the operator $P_I K^* K P_I$ into diagonal and off-diagonal
  and uses $\norm{K}\le 1$ and a Neumann series expansion for its inverse,
  following the techniques from~\cite{Gribonval2008,Dossal2005}.
\end{proof}

\begin{remark}
  \upshape
  By the assumptions of theorem~\ref{theorem_l1-NeumannERC_noise}, the operator norm of $K$ is bounded by 1,
  i.e. $\norm{Ke_i}_{\hilbert_2}\leq 1$ for all $i\in\Z$.
  Hence, to ensure the Neumann $\varepsilon$ERC~(\ref{eq_l1-00-NeumannERC_noise2}),
  one has necessarily for the noise-to-signal ratio $\nsr<1/2$.
  For a lot of examples one can normalize $K$, so that $\norm{K e_i}_{\hilbert_2} = 1$ holds for all $i\in\Z$.
  We do this for the example from digital holography in
  section \ref{sec_applications}.
  In this case the 
  Neumann $\varepsilon$ERC~(\ref{eq_l1-00-NeumannERC_noise2}) reads as
  \[
    \CI + \CIC
    < 1 \, - \, 2 \nsr.
  \]
  This condition coincides with 
  the result presented in~\cite{Denis2009c} for the orthogonal matching pursuit.
\end{remark}

\begin{remark}\label{remark_fuchs}
  \upshape
  We remark that the correlations $\CI$ and $\CIC$ can be estimated from
  above, with $N:= \norm{u\sparse}_{\ell^0}$, by
  \[
    \CI  \leq (N-1) \, \mu
    \qquad \mbox{and} \qquad
    \CIC \leq N \, \mu.
  \]
  Consequently, the exact recovery condition in terms of the coherence parameter $\mu$
  can easily be deduced from conditions~(\ref{eq_l1-00-NeumannERC_noise2}) and~(\ref{eq_l1-00-Neumannparameter_rule2}).
  This will result in Fuchs' exact recovery condition from~\cite{fuchs2005exactsparse}.
\end{remark}

\section{Relations between recovery conditions and the source
  condition}
\label{sec:relat-betw-recov}

In this section we compare the different conditions which have been used.
As we have seen in theorems~\ref{theorem_ell1-convergence-rate}
and~\ref{theorem_l1-upperbounds} both the
SC~\eqref{eq_l1-soucecond-convergence rate} and the ERC~\eqref{eq_ERC}
lead to a linear convergence rate under an appropriate parameter
choice rule. However, the latter also leads to exact recovery.  One
should note that the ERC and the SC are crucially different is some sense:
The ERC is a \emph{uniform} condition in the sense that it uses a
given support $I$ and hence, also leads to a result which holds for
all vectors with that support. The SC on the other hand depends on a
particular sign pattern $\Sign(u\sparse)$ and hence, leads to a result
which holds for all vectors with that sign pattern. We may hence
strengthen the SC to a ``uniform source condition'' (uniform SC)  as follows:
\begin{equation*}
  \label{eq_uniform_SC}
  \text{for all }\ u\sparse \text{ with } \supp u\sparse\subset I\ \text{ there exists }\ w\in\hilbert_2 \text{ such that }\  K^*w \in \Sign(u\sparse).
\end{equation*}
As it turns out, the ERC does not only imply the uniform SC but even a
``uniform strict source condition'':
\begin{proposition}[ERC $\Rightarrow$ uniform strict SC]
  Let $I$ be finite and let $KP_I$ be injective. Then the
  ERC~\eqref{eq_ERC} implies the following \emph{uniform strict SC}:
  \begin{equation}
  \label{eq_uniform_strict_SC}
  \text{for all }\ u\sparse \text{ with } \supp(u\sparse)\subset I\ \text{ there exists }\ w\in\hilbert_2:
  \begin{cases}
    P_IK^*w = P_I\sign(u\sparse)\\
    \norm{P_{I^\complement}K^*w}_{\ell^\infty} < 1.
  \end{cases}
\end{equation}
\end{proposition}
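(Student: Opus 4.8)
The plan is to exhibit, for each admissible sign pattern, an explicit dual certificate built from the pseudoinverse of $KP_I$, following the classical Fuchs construction. Fix any $u\sparse$ with $\supp(u\sparse)\subset I$ and set $s:=\sign(u\sparse)$; since the support of $u\sparse$ lies in $I$, the vector $s$ is supported in $I$, satisfies $P_I s = s = P_I\sign(u\sparse)$, and has $\norm{s}_{\ell^\infty}\le 1$. The natural candidate certificate is
\[
  w := \big((KP_I)^\dagger\big)^{\!*} s \in \hilbert_2.
\]
Because $I$ is finite, $KP_I$ has finite-dimensional, hence closed, range, so its pseudoinverse is bounded and the standard identity $\big((KP_I)^\dagger\big)^{*}=\big((KP_I)^{*}\big)^{\dagger}$ is available; I would use it freely together with $(KP_I)^{*}=P_I K^*$.

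For the first (equality) condition I would invoke the injectivity hypothesis. Since $KP_I$ annihilates $\spa\set{e_i}{i\notin I}$ and, being injective on $\rg(P_I)$, has kernel exactly $\rg(P_I)^\perp$, one has $(KP_I)^\dagger (KP_I)=P_I$. Passing to adjoints then gives $P_I K^*\big((KP_I)^\dagger\big)^{*}=P_I$, and applying this to $s$ together with $P_I s=P_I\sign(u\sparse)$ yields precisely $P_I K^* w = P_I\sign(u\sparse)$. This adjoint/pseudoinverse bookkeeping---making sure the relevant products of $KP_I$ and its pseudoinverse collapse to the correct orthogonal projection $P_I$---is the only delicate point of the argument; everything hinges on reading ``$KP_I$ injective'' as injectivity on $\rg(P_I)$, which is exactly what the FBI property supplies.

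The second (strict inequality) condition then follows by duality and H\"older. For $i\in I^\complement$,
\[
  (K^*w)_i=\ip{w}{Ke_i}=\Ip{s}{(KP_I)^\dagger K e_i},
\]
so that $|(K^*w)_i|\le\norm{s}_{\ell^\infty}\,\norm{(KP_I)^\dagger Ke_i}_{\ell^1}\le\norm{(KP_I)^\dagger Ke_i}_{\ell^1}$. Taking the supremum over $i\in I^\complement$ gives
\[
  \norm{P_{I^\complement}K^*w}_{\ell^\infty}\le\sup_{i\in I^\complement}\norm{(KP_I)^\dagger Ke_i}_{\ell^1}<1
\]
by the ERC~\eqref{eq_ERC}. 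Since $u\sparse$ was an arbitrary vector with support in $I$, this establishes the uniform strict SC. The construction is genuinely \emph{uniform} because $w$ depends on $u\sparse$ only through its sign pattern $s$, while the controlling bound on $I^\complement$ is the $u\sparse$-independent quantity appearing in the ERC.
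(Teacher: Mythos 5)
Your proof is correct and follows essentially the same route as the paper: you build the very same dual certificate $w=(P_IK^*)^\dagger P_I\sign(u\sparse)$ (your $\bigl((KP_I)^\dagger\bigr)^{*}s$ equals it via $\bigl((KP_I)^\dagger\bigr)^{*}=\bigl((KP_I)^{*}\bigr)^{\dagger}$), and the strict bound on $I^\complement$ is the identical H\"older-plus-ERC estimate. The only cosmetic difference is that the paper obtains the equality condition by invoking surjectivity of $P_IK^*$ (and closes the supremum argument by noting $|\ip{K^*w}{e_j}|\to 0$), whereas you verify it directly from $(KP_I)^\dagger(KP_I)=P_I$ and bound the supremum uniformly by the ERC constant, which is equally valid.
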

\begin{proof}
  Let $u\sparse$ be such that $\supp(u\sparse)\subset I$. Since $K
  P_I$ is injective, the operator $P_IK^*:\hilbert_2 \to \ell^2(I)$ is
  surjective and hence, the equation
  \[
  P_IK^* w = P_I\sign(u\sparse)
  \]
  has a solution which can be expressed as
  \[
  w = (P_IK^*)^\dagger P_I K^* w = (P_IK^*)^\dagger P_I\sign(u\sparse).
  \]
  Now it remains to check, that for $j\notin I$ it holds that
  $|\ip{K^*w}{e_j}|< 1$: With the H\"older inequality
  and the ERC it follows that
  \begin{align*}
    |\ip{K^*w}{e_j}| & = |\ip{K^*(P_IK^*)^\dagger P_I\sign(u\sparse)}{e_j}|\\
    &= |\ip{P_I\sign(u\sparse)}{(KP_I)^\dagger K e_j}|\\
    &\leq \norm{P_I\sign(u\sparse)}_{\ell^\infty}\norm{(KP_I)^\dagger K
      e_j}_{\ell^1}<1.
  \end{align*}
  Finally, $|\ip{K^*w}{e_j}|\to 0$ for $j\to\infty$ and hence
  $\norm{P_{I^\complement}K^*w}_{\ell^\infty} < 1$.
\end{proof}
It should be noted that a similar result appears in~\cite[Theorem
4.7]{Grasmair_suf_nec_ell1}. There it is shown that a linear
convergence rate for the minimizers $u^{\alpha,\varepsilon}$ already
implies that the strict SC holds and hence, by
theorem~\ref{theorem_l1-upperbounds} ERC implies strict SC.


Another important condition in the context of sparse recovery is the
so called \emph{null space property} (NSP).  An operator $K:\ell^2\to
\hilbert_2$ is said to have the NSP for the set
$I\subset \N$, if for any $u\in \kr K$, $u\neq 0$ it holds that
\begin{equation}
  \label{eq:NSP}
  \norm{P_Iu}_{\ell^1} < \norm{P_{I^\complement}u}_{\ell^1}.
\end{equation}
The importance of the NSP comes from the following theorem on the
performance of $\ell^1$-minimization:
\begin{theorem}[{\cite[Thm.~2, Thm.~3]{gribonval2007sparsenessmeasure}}]
  \label{thm:NSP}
  Any vector $u\sparse$ with $\supp u\sparse \subset I$ is the unique
  solution of
  \[
  \min_u \norm{u}_{\ell^1}\ \text{ s.t. }\ Ku = Ku\sparse
  \]
  if and only if $K$ fulfills the NSP for the set $I$.
\end{theorem}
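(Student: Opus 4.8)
The plan is to prove both implications of the equivalence by exploiting the elementary fact that any two feasible points of the constrained problem differ by an element of $\kr K$. Throughout I write a generic competitor as $v = u\sparse + h$ with $h := v - u\sparse$; since the constraint forces $Kv = Ku\sparse$, feasibility of $v$ is equivalent to $h \in \kr K$. Because $\supp u\sparse \subset I$ one has $P_{I^\complement} u\sparse = 0$, so the $\ell^1$ norm of any competitor splits along the partition $I \cup I^\complement$ as
\begin{equation*}
  \norm{v}_{\ell^1} = \norm{P_I u\sparse + P_I h}_{\ell^1} + \norm{P_{I^\complement} h}_{\ell^1}.
\end{equation*}
This identity is the common backbone of both directions.

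For sufficiency, assume the NSP~\eqref{eq:NSP} holds and fix $u\sparse$ with $\supp u\sparse \subset I$. For any feasible $v \neq u\sparse$ set $h = v - u\sparse \in \kr K$, so $h \neq 0$. Applying the reverse triangle inequality to the first term above, together with $\norm{P_I u\sparse}_{\ell^1} = \norm{u\sparse}_{\ell^1}$, gives
\begin{equation*}
  \norm{v}_{\ell^1} \geq \norm{u\sparse}_{\ell^1} + \big( \norm{P_{I^\complement} h}_{\ell^1} - \norm{P_I h}_{\ell^1} \big).
\end{equation*}
The NSP makes the bracketed term strictly positive, hence $\norm{v}_{\ell^1} > \norm{u\sparse}_{\ell^1}$ and $u\sparse$ is the unique minimizer.

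For necessity, assume every such $u\sparse$ is the unique minimizer and take an arbitrary $h \in \kr K$ with $h \neq 0$; I must derive~\eqref{eq:NSP}. The key step is to choose the test vector $u\sparse := P_I h$, which is supported in $I$, and to compare it with the competitor $v := -P_{I^\complement} h$. Since $u\sparse - v = P_I h + P_{I^\complement} h = h \in \kr K$, we have $Kv = Ku\sparse$, so $v$ is feasible, and $v \neq u\sparse$ because $h \neq 0$. Uniqueness of the minimizer then yields $\norm{u\sparse}_{\ell^1} < \norm{v}_{\ell^1}$, which upon inserting the definitions reads $\norm{P_I h}_{\ell^1} < \norm{P_{I^\complement} h}_{\ell^1}$, i.e.~precisely the NSP.

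There is no genuinely hard step here; the only point requiring care is the paper's convention that $\norm{\cdot}_{\ell^1}$ takes the value $+\infty$ on $\ell^2 \setminus \ell^1$. This is harmless: in the sufficiency direction a competitor $v \notin \ell^1$ automatically satisfies $\norm{v}_{\ell^1} = \infty > \norm{u\sparse}_{\ell^1}$, and in the necessity direction a kernel element $h \notin \ell^1$ gives $\norm{P_{I^\complement} h}_{\ell^1} = \infty$ so that~\eqref{eq:NSP} holds trivially. One may therefore restrict attention to $h \in \ell^1$, where all displayed norms are finite and the manipulations above are rigorously justified.
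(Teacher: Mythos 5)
Your proof is correct, but there is nothing in the paper to compare it against: theorem~\ref{thm:NSP} is stated as a quotation of \cite[Thm.~2, Thm.~3]{gribonval2007sparsenessmeasure} and the paper itself gives no proof. Your argument is the standard one (essentially that of the cited reference): sufficiency via the decomposition $\norm{v}_{\ell^1}=\norm{P_I u\sparse + P_I h}_{\ell^1}+\norm{P_{I^\complement}h}_{\ell^1}$ and the reverse triangle inequality, necessity via the test pair $u\sparse := P_I h$, $v := -P_{I^\complement}h$, which are feasible for the same data because they differ by $h\in\kr K$. Both directions are logically sound. In particular, in the degenerate case $P_{I^\complement}h=0$ your necessity argument yields $\norm{P_I h}_{\ell^1}<0$, an absurdity; this is not a flaw, it merely shows that the uniqueness hypothesis excludes nonzero kernel elements supported in $I$, and the implication remains valid. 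One point you use tacitly and should state: $I$ is finite in the setting of the paper (it is the support of $u\sparse\in\ell^0$). Finiteness of $I$ is what guarantees $\norm{u\sparse}_{\ell^1}<\infty$ in the sufficiency direction (so that the bound $\norm{v}_{\ell^1}=\infty>\norm{u\sparse}_{\ell^1}$ is informative) and $P_Ih\in\ell^1$ in the necessity direction (so that $h\notin\ell^1$ indeed forces $\norm{P_{I^\complement}h}_{\ell^1}=\infty$); for infinite $I$ the statement itself becomes delicate. Finally, it is worth noting that the closest internal relative of your necessity argument is the paper's proof of proposition~\ref{prop:usSC-implies-NSP}, which likewise pairs a kernel element with a sign pattern supported on $I$, but that proposition establishes a different implication (uniform strict SC $\Rightarrow$ NSP), so your proof genuinely fills a gap the paper delegates to the literature.
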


However, the NSP is implied by the uniform strict SC:
\begin{proposition}
  \label{prop:usSC-implies-NSP}
  The uniform strict SC~\eqref{eq_uniform_strict_SC} implies the NSP
  \eqref{eq:NSP}.
\end{proposition}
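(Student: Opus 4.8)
The plan is to exploit the duality pairing between a kernel element and a source element furnished by the uniform strict SC. Let $v \in \kr K$ with $v \neq 0$; the goal is to establish the strict inequality $\norm{P_I v}_{\ell^1} < \norm{P_{I^\complement} v}_{\ell^1}$. The key observation is that for any $w \in \hilbert_2$ one has $\ip{K^* w}{v}_{\ell^2} = \ip{w}{Kv}_{\hilbert_2} = 0$, so every dual certificate $K^*w$ is $\ell^2$-orthogonal to $v$. The idea is to pick $w$ so that $K^*w$ coincides on $I$ with the sign pattern of $v$, which turns the $I$-part of this orthogonality relation into exactly $\norm{P_I v}_{\ell^1}$.

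Concretely, first I would choose a sparse vector carrying the right signs, e.g.~$u\sparse := P_I v$, so that $\supp(u\sparse) \subset I$ and $\sign(u\sparse)_i = \sign(v_i)$ for every $i \in I$. Applying the uniform strict SC~\eqref{eq_uniform_strict_SC} to this $u\sparse$ yields $w \in \hilbert_2$ with $P_I K^* w = P_I \sign(u\sparse)$ and $\norm{P_{I^\complement} K^* w}_{\ell^\infty} < 1$. Next I would split the vanishing pairing along $I$ and $I^\complement$,
\[
  0 = \ip{K^* w}{v}_{\ell^2}
    = \ip{P_I K^* w}{P_I v}_{\ell^2} + \ip{P_{I^\complement} K^* w}{P_{I^\complement} v}_{\ell^2}.
\]
The first summand evaluates to $\sum_{i \in I} \sign(v_i)\, v_i = \norm{P_I v}_{\ell^1}$ by the choice of signs (the terms with $v_i = 0$ contributing nothing), while the second is bounded by H\"older's inequality as $|\ip{P_{I^\complement} K^* w}{P_{I^\complement} v}| \leq \norm{P_{I^\complement} K^* w}_{\ell^\infty}\, \norm{P_{I^\complement} v}_{\ell^1}$. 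Rearranging then gives $\norm{P_I v}_{\ell^1} \leq \norm{P_{I^\complement} K^* w}_{\ell^\infty}\, \norm{P_{I^\complement} v}_{\ell^1}$.

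The main point to get right is the \emph{strictness} together with the degenerate case. If $\norm{P_{I^\complement} v}_{\ell^1} > 0$, the strict bound $\norm{P_{I^\complement} K^* w}_{\ell^\infty} < 1$ immediately upgrades the last inequality to $\norm{P_I v}_{\ell^1} < \norm{P_{I^\complement} v}_{\ell^1}$, which is the NSP~\eqref{eq:NSP}. If instead $\norm{P_{I^\complement} v}_{\ell^1} = 0$, then $v$ is supported on the finite set $I$ and the inequality forces $\norm{P_I v}_{\ell^1} \leq 0$, i.e.~$v = 0$, contradicting $v \neq 0$; equivalently, applying the same pairing argument to a hypothetical nonzero $v \in \kr K$ supported in $I$ shows the uniform strict SC already excludes it. I expect this bookkeeping around the strict inequality and the empty-complement case to be the only delicate part; the remainder is the sign choice followed by a single use of H\"older's inequality.
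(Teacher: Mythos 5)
Your proof is correct and follows essentially the same route as the paper's: pair a nonzero kernel element with a dual certificate obtained by applying the uniform strict SC to its sign pattern on $I$, evaluate the $I$-part exactly, and bound the $I^\complement$-part by H\"older's inequality. The only differences are cosmetic (you use $\sign(v_i)$ where the paper uses $-\sign(u_i)$), plus your explicit handling of the degenerate case $P_{I^\complement}v=0$, which the paper's strict inequality silently glosses over and which your version settles cleanly.
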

\begin{proof}
  For any $u\in \kr K$ and any $v\in\hilbert_2$ it holds that
  \[
  0 = \ip{Ku}{v} = \ip{u}{K^* v}.
  \]
  Now we define $u\sparse$ by
  \[
  i\in I:\ \sign(u\sparse_i) = -\sign(u_i),\qquad i\notin I:
  u\sparse_i =0.
  \]
  Due to~\eqref{eq_uniform_strict_SC} we can find $w$ such that $K^*w
  \in\Sign(u\sparse)$ and moreover
  $\norm{P_{I^\complement}w}_{\ell^\infty}<1$. Using this $w$ instead of $v$,
  we get from the definition of $u\sparse$ and the H\"older inequality
  \[
  0 = \ip{u}{K^*w} = \sum_{i\in I}u_i\sign(u\sparse_i) + \sum_{i\notin
    I} u_i (K^*w)_i< -\norm{P_Iu}_{\ell^1} + \norm{P_{I^\complement}u}_{\ell^1}
  \]
  which shows the assertion.
\end{proof}
The fact that the strict SC is an important condition in this context
was already observed in~\cite[Section
II]{candes2005decoding}. Combining their argumentation there with
theorem~\ref{thm:NSP} one obtains another proof of
proposition~\ref{prop:usSC-implies-NSP}.

Since there are plenty of conditions which are related to the
performance of $\ell^1$-minimization, we end this section with an
illustration of the implications between different conditions in the
context of this paper. First, the obvious implication between the
different ``ERCs'':
\[
\begin{array}{ccccccccc}
   & & \varepsilon\text{ERC} & & \\
   & \Nearrow & & \Searrow & \\
  \text{Neumann $\varepsilon$ERC} & & & & \text{ERC} \\
   & \Searrow & & \Nearrow & \\
   & & \text{Neumann ERC} & & 
\end{array}
\]
And then the relation of ERC, SC and NSP:
\[
\begin{array}{ccccc}
    & & & &\text{SC}\\
    & & & \Nearrow& \\
    \text{ERC + $KP_I$ injective}& \Rightarrow & \text{uniform strict SC} & &\\
   & & & \Searrow &\\
    & & & &\text{NSP}\\
\end{array}
\]
\begin{remark}
  \label{rem_ERC_NSP}
  The implication ``ERC + $KP_I$ injective $\Rightarrow$ NSP'' has
  already been observed in~\cite[remark
  4]{gribonval2007sparsenessmeasure}. Moreover,~\cite[example
  1]{gribonval2007sparsenessmeasure} shows that the converse
  implication does not hold. We postpone further investigation of
  converse implications to future work.
\end{remark}

\section{Application of exact recovery conditions to digital holography}
\label{sec_applications}

To apply the Neumann $\varepsilon$ERC~(\ref{eq_l1-00-NeumannERC_noise2}),
one has to know the support $I$.
In this case, there would be no need to apply complex reconstruction methods. One may just
solve the restricted least squares problem.
For deconvolution problems, however, with a certain prior knowledge,
it is possible to evaluate
the Neumann $\varepsilon$ERC~(\ref{eq_l1-00-NeumannERC_noise2})
a priori,
especially when the support $I$ is not known exactly.

In the following we use the Neumann $\varepsilon$ERC~(\ref{eq_l1-00-NeumannERC_noise2}) exemplarily
for an inverse convolution problem
as it is used in digital holography of particles~\cite{soulez2007holography,Denis2009d}.
The presentation relies on~\cite{Denis2009c} and we reproduce it here for the sake of completeness
in a compact style.
In digital holography, the hologram corresponds to the diffraction patterns of the illuminated particles.
The hologram is recorded digitally on a charge-coupled device (CCD), from 
the diffraction patterns the size and the distribution of particles are reconstructed.

We consider the case of spherical particles,
which is of significant interest in
applications such as fluid mechanics.
We model the particles $j\in\{1,\ldots,N\}$
as opaque disks $B_r(\cdot-x_j,\cdot-y_j,\cdot-z_j)$
with center $(x_j,y_j,z_j)\in\R^3$ and radius $r$.
Hence the source $f\sparse$ is given as a sum of characteristic functions
\[
  \textstyle
  f\sparse = \sum\limits_{j=1}^N u_j\sparse \, \chi_{B_r}(\cdot-x_j,\cdot-y_j,\cdot-z_j)
    =: \sum\limits_{j=1}^N u_j\sparse \, \chi_j.
\]
The real values $u_j\sparse$ are amplitude factors of the diffraction pattern
that in practice depend on experimental parameters.

The forward operator $K:\ell^2 \to L^2(\R^2)$, which maps the coefficients $u_j$ to
the corresponding digital hologram, 
is well modeled by a bidimensional convolution $\convbi$ with respect to $(x,y)$.
In the following $\iota$ represents the imaginary unit.
Let $h_{z_j}$ constitute the Fresnel function defined by
\[
  h_{z_j}(x,y) = \frac{1}{\iota \lambda z_j} \exp \Big(\iota \frac{\pi}{\lambda z_j} \norm{R}^2 \Big),
  \qquad 
  \mbox{with }
  R:=(x,y).
\]
With that, the hologram of a particle at position $(x_j,y_j,z_j)$
and hence the corresponding
operator response $K e_j$ has the following form~\cite{soulez2007holography}:
\begin{equation}\label{eq_atom_fresnel_1}
  \big(K e_j\big) (x,y)
  := \frac{2}{\pi r^2}
  \chi_{B_r}(x-x_j,y-y_j) \;\convbi\;  \text{Re}\big(h_{z_j} (x-x_j,y-y_j) \big).
\end{equation}
The factor $2/ (\pi r^2)$ assures $K e_j$ to be unit-normed, cf.~\cite{Denis2009c}.


The first step to evaluate the
Neumann $\varepsilon$ERC~(\ref{eq_l1-00-NeumannERC_noise2})
is to calculate the correlation 
$|\ip{K e_i}{K e_j}|$ with distance $\varrho_{j,i} := (x_j-x_i,y_j-y_i)$.
In the following we assume that all particles are located in a plane parallel to the detector,
i.e.~$z:=z_i$ is constant for all $i$.
In~\cite{Denis2009c} it has been shown that the correlation in digital holography
can be estimated by the following majorizing function $\majo:\R^+ \to \R$, 
with known constants $b_L \approx 0.6748$, $c_L \approx 0.7857$ and 
$C(d)$ denoting the area of the intersection of two circles with radius $r$ and distance $d$
\begin{align}
  |\ip{K e_i&}{K e_j}| (\varrho_{j,i})
  \leq
    \majo(\norm{\varrho_{j,i}})\notag\\
  & :=
    \frac{C(\norm{\varrho_{j,i}})}{\pi r^2}
    + \frac{1}{4}
    \min \Big\{b_L^2,
    c_L^2 \Big(\frac{\lambda z}{2\pi r}\Big)^{\frac{2}{3}} \norm{\varrho_{j,i}}^{-\frac{2}{3}} \Big\}
    \min \Big\{1,
    \frac{2\lambda z}{\pi} \norm{\varrho_{j,i}}^{-2} \Big\},
  \label{eq_majo}
\end{align}
which is monotonically decreasing in $\norm{\varrho_{j,i}}$, cf.~\cite{Denis2009c}.

With the estimate~(\ref{eq_majo}), we come to a resolution bound for droplets jet reconstruction,
as e.g.~used in~\cite{soulez2007holography}.
Here monodisperse droplets (i.e.~they have the same size, shape and mass)
were generated and emitted on a strait line parallel to the detector plane.
This configuration eases the computation of the $\varepsilon$Neumann ERC.
We define that the particles are located
at some grid points
\[
  \Delta\Z:=\set{i\in\Z}{i/\Delta \in\Z},
\]
where the parameter $\Delta$ describes the grid refinement.
Assume that the particles have the minimal distance
\[
  \rho := \min\limits_{i,j\in\supp (u\sparse)} \norm{\varrho_{j,i}}\in\Delta\N,
\]
then the sums of correlations $\CI$ and $\CIC$ 
can be estimated from above. 
W.l.o.g. we fix one particle at the origin and estimate with the worst case that the 
other particles appear at a distance of $j\rho$ to the origin, with $-\lfloor N/2 \rfloor \leq j \leq \lfloor N/2 \rfloor$.
Then, 
for $\rho>\Delta$ we get
\begin{align*}
  \CI =
  \sup\limits_{i\in I} \sum\limits_{\sumstack{j\in I}{j \neq i}} & |\ip{K e_i}{K e_j}|
    \leq 2 \; \sum\limits_{j=1}^{\lfloor N/2 \rfloor} \majo(j\rho),\\
  \CIC =
  \sup\limits_{i\in I^\complement} \sum\limits_{j\in I} & |\ip{K e_i}{K e_j}|
    \leq \sup\limits_{\sumstack{i\in\Delta\Z}{\Delta\leq i \leq \rho-\Delta}} \;
    \sum\limits_{j=-\lfloor N/2 \rfloor}^{\lfloor N/2 \rfloor}
    \majo(|j \rho-i|).
\end{align*}

Consequently, we can formulate an estimate for the Neumann $\varepsilon$ERC~(\ref{eq_l1-00-NeumannERC_noise2}).

\begin{proposition}[Neumann $\varepsilon$ERC for Fresnel-convolved characteristic functions]\label{theorem_fresnel}
  An estimate from above for the Neumann $\varepsilon$ERC~(\ref{eq_l1-00-NeumannERC_noise2})
  for characteristic functions convolved with the real part of the Fresnel kernel
  is for $\rho>\Delta$
  \begin{equation}
    2 \sum\limits_{j=1}^{\lfloor N/2 \rfloor} \majo(j\rho)
    + \sup\limits_{1\leq i < \tfrac{\rho}{\Delta}} \;
      \sum\limits_{j=-\lfloor N/2 \rfloor}^{\lfloor N/2 \rfloor}
      \majo(|j \rho-i\Delta|)
    < 1 - 2\nsr.
    \label{eq_theorem_fresnel}
  \end{equation}
  This means, that there is a regularization parameter $\alpha$ which allows exact recovery 
  of the support
  with the $\ell^1$-penalized Tikhonov regularization,
  if the above condition is fulfilled.
\end{proposition}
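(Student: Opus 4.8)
The plan is to derive the claim as a direct specialization of the Neumann $\varepsilon$ERC~\eqref{eq_l1-00-NeumannERC_noise2} to the holographic forward operator, so that Theorem~\ref{theorem_l1-NeumannERC_noise} can be invoked verbatim. First I would record the two structural facts that make the reduction work: the normalization factor $2/(\pi r^2)$ in~\eqref{eq_atom_fresnel_1} guarantees $\norm{Ke_i}_{\hilbert_2}=1$ for every $i$, so that the right-hand side of~\eqref{eq_l1-00-NeumannERC_noise2} collapses to $1-2\nsr$; and the correlation majorant~\eqref{eq_majo} together with its monotonicity provides a pointwise upper bound $|\ip{Ke_i}{Ke_j}|\le\majo(\norm{\varrho_{j,i}})$ depending only on the inter-particle distance. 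These two facts reduce the whole statement to bounding $\CI$ and $\CIC$ by the explicit sums appearing in~\eqref{eq_theorem_fresnel}.

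Next I would establish the worst-case bounds on $\CI$ and $\CIC$ already announced in the text. For $\CI$, fix an on-support index $i$ and note that, since all support points lie on the grid $\Delta\Z$ with pairwise distance at least $\rho$ and $\majo$ is monotonically decreasing, each summand $|\ip{Ke_i}{Ke_j}|$ is largest when the remaining particles are packed as tightly as the minimal-distance constraint permits, i.e.\ at distances $\rho,2\rho,\dots$ symmetrically on both sides of $i$. This yields the (conservative) estimate $\CI\le 2\sum_{j=1}^{\lfloor N/2 \rfloor}\majo(j\rho)$, the factor two accounting for the two sides and $\lfloor N/2 \rfloor$ for the number of neighbours per side. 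For $\CIC$, an off-support index $i$ sits at some offset from the support lattice; writing it as $i'\Delta$ with $i'\in\Z$ and using that the worst location lies strictly between two consecutive support points, the admissible offsets satisfy $\Delta\le i'\Delta\le\rho-\Delta$, and monotonicity of $\majo$ gives $\CIC\le \sup_{\Delta\le i'\Delta\le\rho-\Delta}\sum_{j=-\lfloor N/2 \rfloor}^{\lfloor N/2 \rfloor}\majo(|j\rho-i'\Delta|)$.

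Then I would reindex the supremum: since $\rho\in\Delta\N$, the constraint $\Delta\le i'\Delta\le\rho-\Delta$ on $i'\in\Z$ is exactly $1\le i'<\rho/\Delta$, which matches the range in~\eqref{eq_theorem_fresnel}. Relabelling $i'$ as $i$, adding the two bounds, and invoking the hypothesis~\eqref{eq_theorem_fresnel} gives
\[
\CI+\CIC\le 2\sum_{j=1}^{\lfloor N/2 \rfloor}\majo(j\rho)+\sup_{1\le i<\rho/\Delta}\sum_{j=-\lfloor N/2 \rfloor}^{\lfloor N/2 \rfloor}\majo(|j\rho-i\Delta|)<1-2\nsr=\min_{i\in I}\norm{Ke_i}_{\hilbert_2}^2-2\nsr,
\]
which is precisely the Neumann $\varepsilon$ERC~\eqref{eq_l1-00-NeumannERC_noise2}. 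Theorem~\ref{theorem_l1-NeumannERC_noise} then furnishes a regularization parameter $\alpha$ in the range~\eqref{eq_l1-00-Neumannparameter_rule2} for which $\supp(u^{\alpha,\varepsilon})=I$, completing the argument.

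The step I expect to require the most care is the justification of the two worst-case bounds, in particular the claim that tightest admissible packing is the extremal configuration. This rests entirely on the monotonicity of $\majo$ together with the one-dimensional grid geometry; the subtlety is to argue that no feasible arrangement of the $N$ particles respecting the minimal distance $\rho$ and the line constraint can produce a larger sum than the equispaced-by-$\rho$ configuration, and that for $\CIC$ the supremum over off-grid offsets is attained within one period $[\Delta,\rho-\Delta]$. Everything after these bounds is bookkeeping: the unit normalization, the reindexing, and the citation of Theorem~\ref{theorem_l1-NeumannERC_noise}.
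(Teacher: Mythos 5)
Your proposal is correct and follows essentially the same route as the paper: the paper's own justification consists precisely of the unit-normalization of $Ke_j$ via the factor $2/(\pi r^2)$, the majorant bound~(\ref{eq_majo}) with its monotonicity, the worst-case equispaced-by-$\rho$ bounds on $\CI$ and $\CIC$, and the reindexing $i=i'\Delta$ with $1\le i'<\rho/\Delta$, after which theorem~\ref{theorem_l1-NeumannERC_noise} is invoked. The extremal-configuration step you flag as delicate is treated at the same (informal ``w.l.o.g.\ worst case'') level of rigor in the paper itself, so no gap relative to the paper's argument.
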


\begin{remark}
  \upshape
  In comparison,
  the exact recovery condition in terms of the coherence parameter 
  $\mu =\sup_{i\neq j}|\ip{K e_i}{K e_j}|(\varrho_{j,i}) \leq \sup_{i\neq j}\majo(\norm{\varrho_{j,i}}) = \majo (\Delta)$
  according to Fuchs' exact recovery condition from~\cite{fuchs2005exactsparse}
  appears as
  \[
    (2N -1) \; \majo(\Delta) < 1 - 2\nsr,
  \]
  see remark~\ref{remark_fuchs}.
  This condition 
  is significantly worse than~(\ref{eq_theorem_fresnel}), since asymptotically it holds that
  $\majo(\norm{\varrho_{j,i}}) \sim \norm{\varrho_{j,i}}^{-\frac{8}{3}}$.
\end{remark}

Condition~(\ref{eq_theorem_fresnel}) of proposition~\ref{theorem_fresnel}
seems not to be easy to handle due to the
upper bound $\majo$ from~(\ref{eq_majo}).
However, in practice all parameters are known, and one can compute a bound
via approaching from large $\rho$.
As soon as the sum is smaller than $1-2\nsr$, it is guaranteed that the $\ell^1$-penalized
Tikhonov regularization can recover exactly.

We apply the Neumann $\varepsilon$ERC~(\ref{eq_theorem_fresnel}) to simulated data of
droplets jets. For the simulation we use a red laser of wavelength $\lambda=0.6328$\textmu m and
a distance of $z=$ 200mm from the camera.
The particles have a diameter of $100$\textmu m and for the corresponding
grid we choose a refinement of $25$\textmu m. Those parameters correspond to that of the 
experimental setup used in~\cite{soulez2007holography,soulez2007holography2}.

After applying the digital holography model,
we add Gaussian noise of different noise levels and in each case of zero mean.
For the coefficients $u\sparse_i$, we choose a setting which implies $u\sparse_i \approx 10$ for all $i\in I$.
Figure~\ref{bild_holo1} and~\ref{bild_holo2} show simulated holograms with
different distances $\rho$ and different noise-to-signal ratios $\nsr$.
For all noisy examples in the right columns of  figure~\ref{bild_holo1} and figure~\ref{bild_holo2} it manually was possible to find a regularization parameter $\alpha$ so that all the particles were recovered exactly.
For minimization of the Tikhonov functional we used the iterated soft-thresholding
algorithm~\cite{daubechies2003iteratethresh}.

However, only for the image in figure~\ref{bild_holo1} ($\rho\approx721$\textmu m)
condition~(\ref{eq_theorem_fresnel}) of proposition~\ref{theorem_fresnel} holds,
hence the existence of a suitable regularization parameter was guaranteed.
For the examples in figure~\ref{bild_holo2} the existence of a regularization parameter which ensures exact recovery cannot be shown, i.e.~condition~(\ref{eq_theorem_fresnel}) of proposition~\ref{theorem_fresnel} is not valid.
In the image on top of figure~\ref{bild_holo2},
the particles have
a too small distance to each other ($\rho\approx360$\textmu m), and
even for the noiseless case condition~(\ref{eq_theorem_fresnel}) is not fulfilled.
The image at the bottom of figure~\ref{bild_holo2} ($\rho\approx721$\textmu m) was manipulated with unrealistically huge noise,
so that condition~(\ref{eq_theorem_fresnel}) is violated, too.

\begin{figure}[ht]
  \begin{tabular}{ccc}
    \includegraphics[width=0.35\linewidth]{./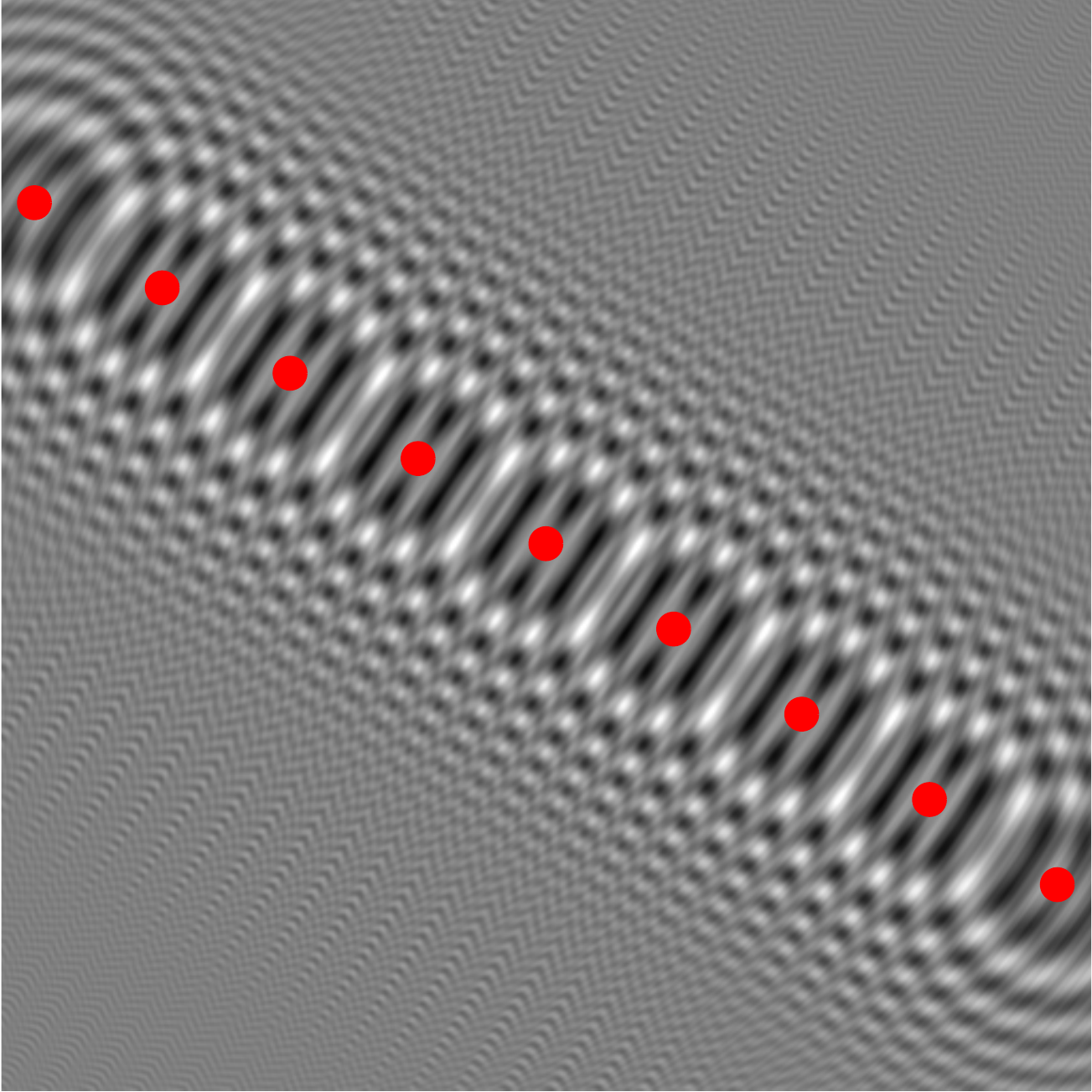}
    & \includegraphics[width=0.35\linewidth]{./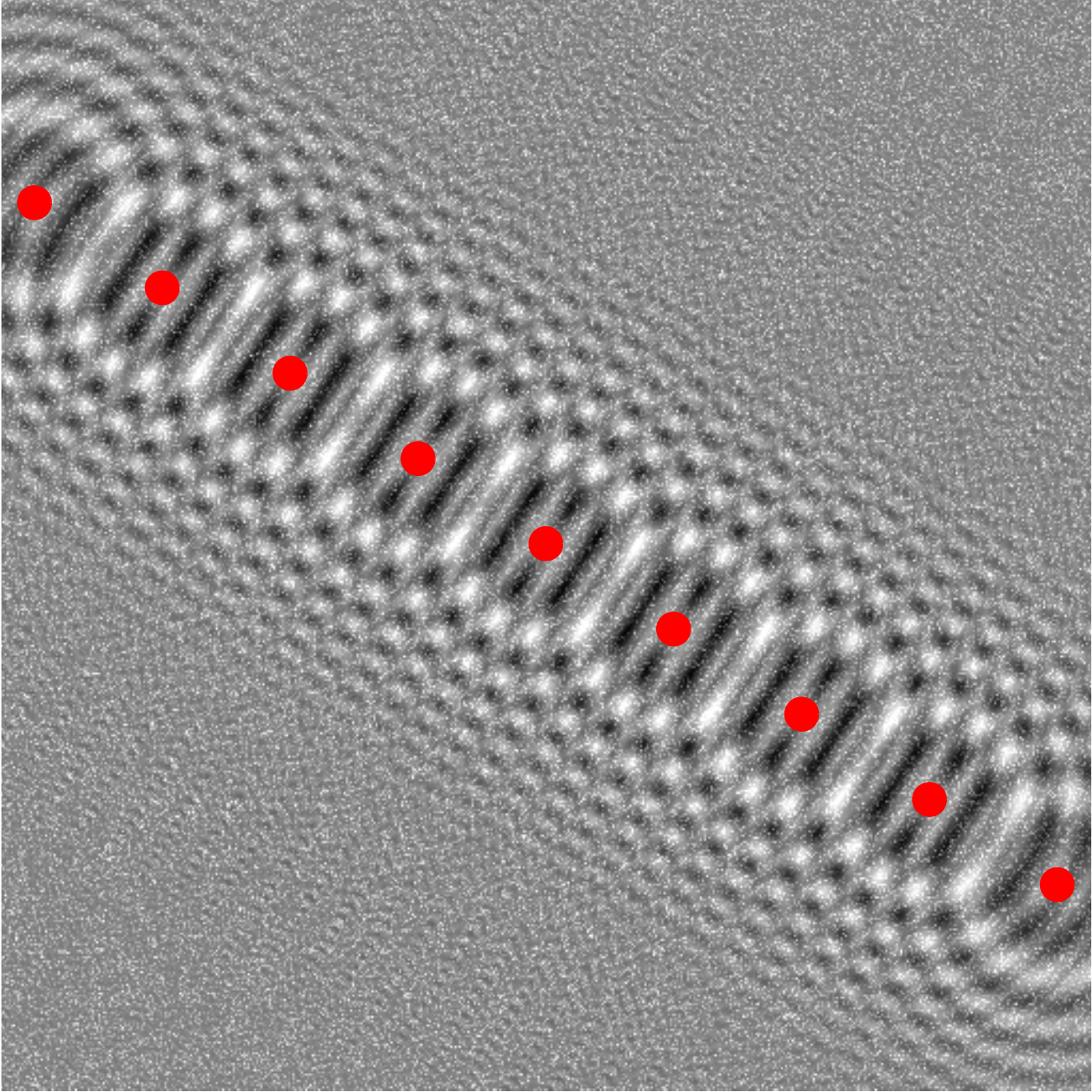}
    &
    \begin{tikzpicture}[xscale=1,yscale=0.75]
      \draw[top color=white,bottom color=black,use as bounding box] (0,0) rectangle (0.5,6.1);
          \foreach \x in {0.0,1.525,3.05,4.575,6.1} 
      \draw (0,\x) -- (0.7,\x) node[right] {\footnotesize {\FPeval\result{round((6/61*{\x} + 0.7):2)}$\FPprint\result$}};
    \end{tikzpicture}\\
    \vspace{-1.2ex}\\
  \end{tabular}
    \caption{Simulated holograms of spherical particles.
           On the left-hand side the noiseless signal is displayed.
           For reconstruction, the noisy signal
           on the right-hand side is used.
           The dots correspond to the true location of the particles.
           The existence of a suitable regularization parameter is
           guaranteed by condition~(\ref{eq_theorem_fresnel}) 
           of proposition~\ref{theorem_fresnel} and
           hence the $\ell^1$-penalized Tikhonov regularization
           recovered all particles exactly.}
  \label{bild_holo1}
\end{figure}

\begin{figure}[ht]
  \begin{tabular}{ccc}
    \includegraphics[width=0.35\linewidth]{./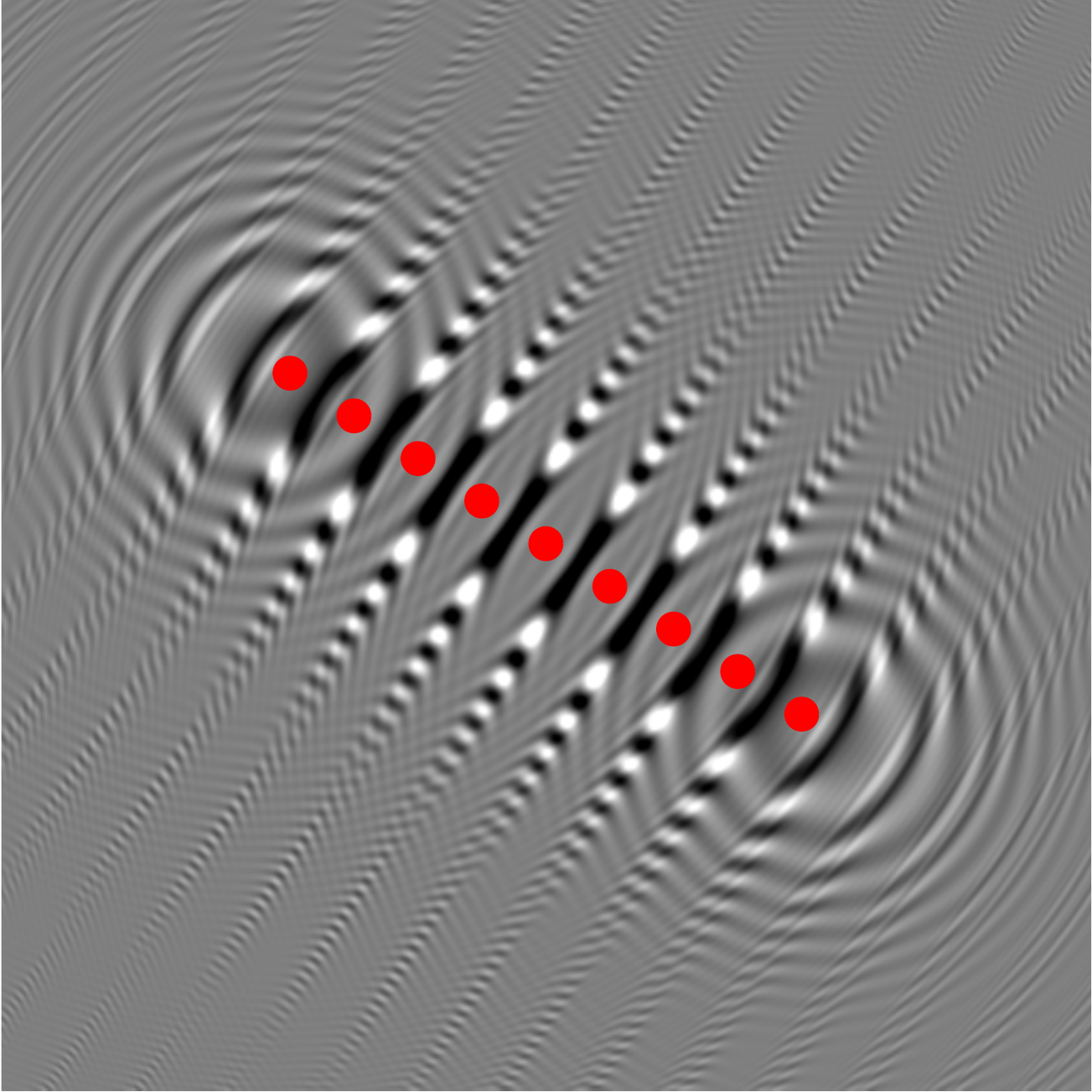}
    & \includegraphics[width=0.35\linewidth]{./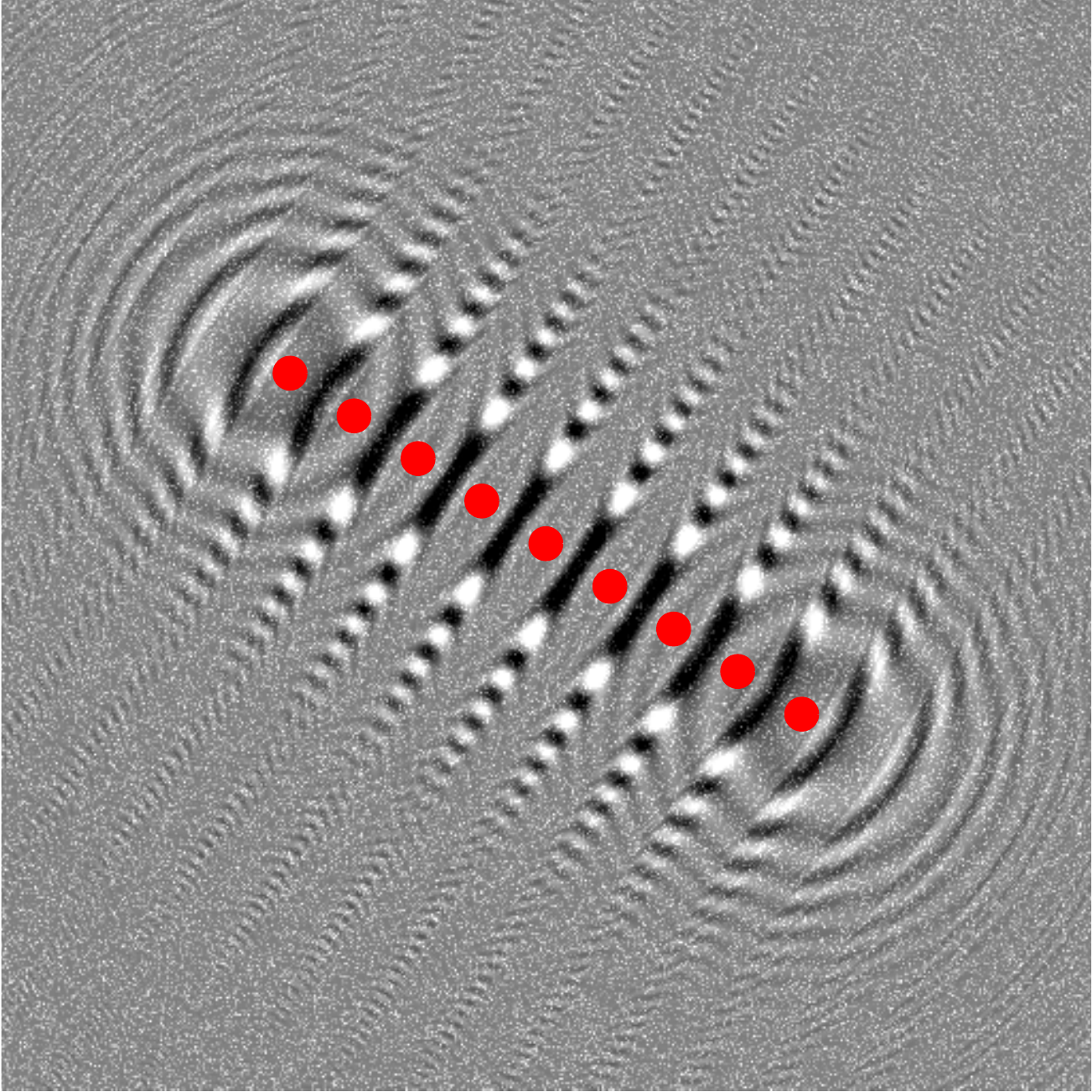}
    &
    \begin{tikzpicture}[xscale=1,yscale=0.75]
      \draw[top color=white,bottom color=black,use as bounding box] (0,0) rectangle (0.5,6.1);
          \foreach \x in {0.0,1.525,3.05,4.575,6.1} 
      \draw (0,\x) -- (0.7,\x) node[right] {\footnotesize {\FPeval\result{round((6/61*{\x} + 0.7):2)}$\FPprint\result$}};
    \end{tikzpicture}\\
    \vspace{-1.2ex}\\
    \includegraphics[width=0.35\linewidth]{./bilder/DH_low-res_no-noise.pdf}
    & \includegraphics[width=0.35\linewidth]{./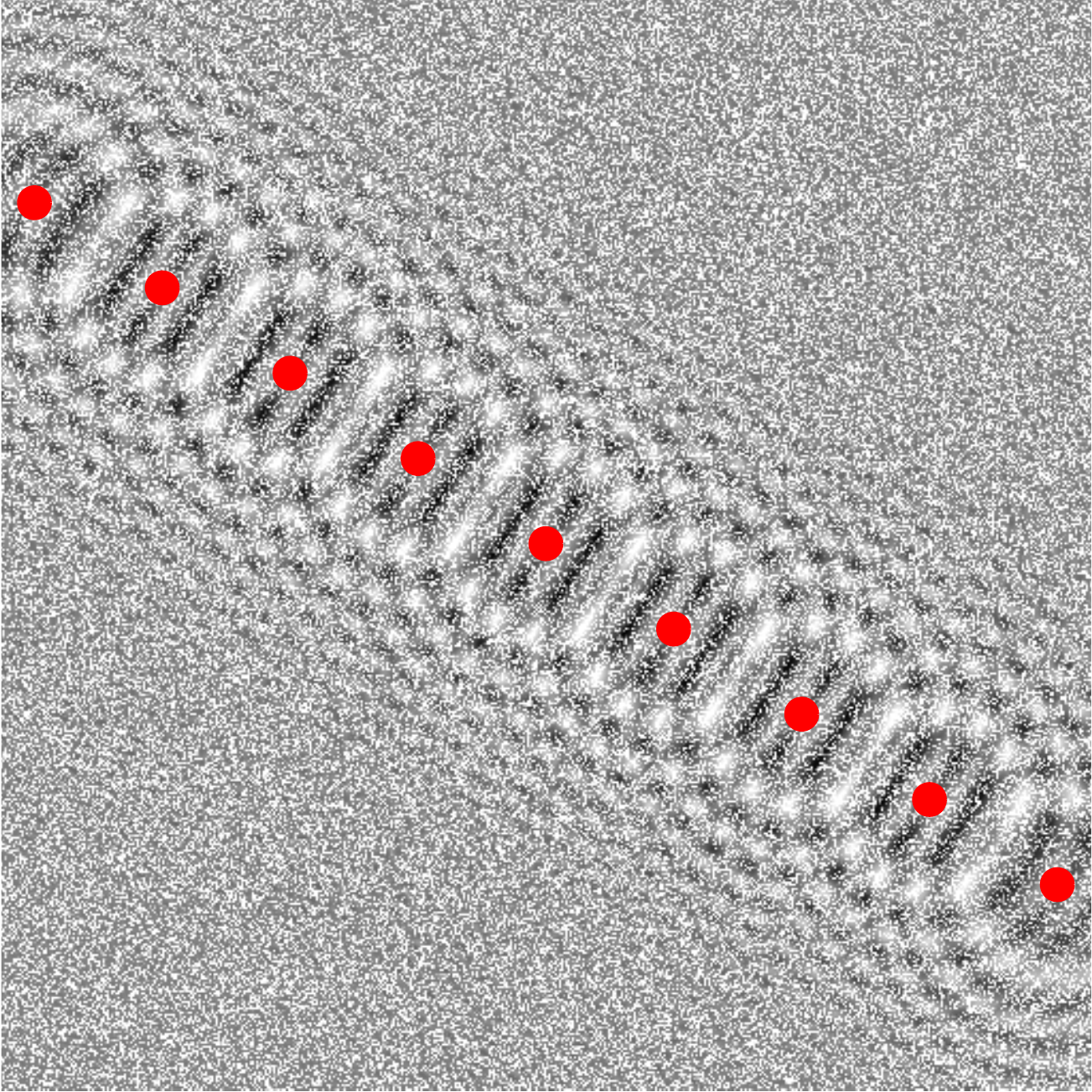}
  \end{tabular}
  \caption{Simulated holograms of spherical particles.
           In the left column the noiseless signals are displayed.
           For reconstruction, the noisy signals
           of the right column are used.
           The dots correspond to the true location of the particles.
           The existence of a suitable regularization parameter
           cannot be shown with condition~(\ref{eq_theorem_fresnel}) 
           of proposition~\ref{theorem_fresnel}, however,
           it is possible to find a regularization parameter so that
           the $\ell^1$-penalized Tikhonov regularization
           recovered all particles exactly.
           The reason why condition~(\ref{eq_theorem_fresnel}) is 
           not fulfilled is that
           in the image on top the particles have
           a too small distance to each other,
           and at the bottom the image was manipulated 
           with unrealistically huge noise.}
  \label{bild_holo2}
\end{figure}

\section{Conclusion}\label{sec-ell1_conclusion}
With the papers~\cite{grasmair2008sparseregularization} and~\cite{Grasmair_suf_nec_ell1},
the analysis of a priori parameter rules for $\ell^1$-penalized Tikhonov
functionals seemed completed. On the common parameter rule $\alpha \asymp \varepsilon$,
linear, i.e.~best possible, convergence is guaranteed.
In this paper we have gone beyond this question by presenting
a parameter rule which ensures exact recovery of 
the unknown support of $u\sparse\in\ell^0$.
Moreover, on that condition we achieve a linear convergence rate
measured in the $\ell^1$ norm,
that comes with a-priori checkable error constants 
which are easier to handle than the ones from~\cite{grasmair2008sparseregularization}.
A side product of our analysis is the proof of convergence 
in $\ell^0$ in the topology of sparse convergence.

Section~\ref{sec:relat-betw-recov} analyzes some implications between
different condition for exact recovery. However, in most cases it
remains open whether the reverse implications also hold and we
postpone this investigation to future work.

Granted, to apply the Neumann $\varepsilon$ERC~(\ref{eq_l1-00-NeumannERC_noise2})
and the Neumann parameter rule~(\ref{eq_l1-00-Neumannparameter_rule2})
one has to know the support $I$.
However, with a certain prior knowledge the correlations
\[
  \CI := \sup\limits_{i\in I} \sum\limits_{\sumstack{j\in I}{j \neq i}} |\ip{K e_i}{K e_j}|
  \quad \text{and} \quad
  \CIC := \sup\limits_{i\in I^\complement} \sum\limits_{j\in I} |\ip{K e_i}{K e_j}|,
\]
can be estimated from above a priori,
especially when the support $I$ is not known exactly.
That way it is possible to obtain a priori computable conditions
for exact recovery.
In section \ref{sec_applications} it has be done
exemplarily for 
characteristic functions convolved with a Fresnel function.
This shows the practical relevance of the condition.

\ack{The authors acknowledge stimulating discussions with Martin
  Benning, Martin Burger and Massimo Fornasier. Moreover, we thank
  R\'emi Gribonval for providing remark~\ref{rem_ERC_NSP}.
  
  Dirk A.~Lorenz is supported by the DFG under grant LO 1436/2-1
  (project ``Sparsity and Compressed Sensing in Inverse Problems'')
  within the Priority Program SPP 1324 ``Extraction of quantitative
  information in complex systems'' and grant LO 1436/3-1 (project
  ``Sparse Exact and Approximate Recovery''). Dennis Trede is
  supported by the BMBF project INVERS under grant 03MAPAH7.  }
 
\section*{References}
 
\bibliographystyle{plain}
\bibliography{literatur}

\end{document}